\begin{document}

\dedicatory{Dedicated to the memory of Todd A. Drumm}

\title[Mapping class group action]
{The mapping class group action on $\mathsf{SU}(3)$-character varieties}

\author[Goldman]{William M. Goldman}
\address{
Department of Mathematics,
University of Maryland,
College Park, MD 20742, USA}
\email{wmg@umd.edu}

\author[Lawton]{Sean Lawton}

\address{Department of Mathematical Sciences, George Mason University,
4400 University Drive,
Fairfax, Virginia  22030, USA}

\email{slawton3@gmu.edu}

\author[Xia]{Eugene Z. Xia}
\address{
Department of Mathematics,
National Cheng Kung University,
Tainan 701, Taiwan}
\email{ezxia@ncku.edu.tw}

\date{\today}


\subjclass[2010]{22F50; 22D40; 37A25; 57N05}

\keywords{Character variety, ergodicity, simple closed curves}

\begin{abstract}
Let $\Sigma$ be a compact orientable surface of genus $g=1$ with $n=1$ boundary component.
The mapping class group $\Gamma$ of $\Sigma$ acts on the 
$\SU(3)$-character variety of $\Sigma$.
We show that the action is ergodic with respect to the natural symplectic measure on the character variety.
\end{abstract}

\maketitle

\tableofcontents

\section{Introduction}

Let $\Sigma = \Sigma_{g,n}$ be the compact oriented surface of genus $g$
with boundary $\partial\Sigma$ which has $n\geq 1$ components
denoted $\partial_1\Sigma,...,\partial_n\Sigma$.
Fix a basepoint $x_0$ on $\Sigma$ and let $\pi = \pi_1(\Sigma,x_0)$ denote its fundamental group.
Let $\HomeoS$ be the group of 
orientation-preserving homeomorphisms of $\Sigma$
which fixes $\partial\Sigma$ pointwise.
Define the {\em mapping class group\/} of $\Sigma$:
\[ \Gamma : = \pi_0\big(\HomeoS\big).\]
Alternatively, 
choose basepoints $x_i\in\partial_i\Sigma$ and paths  from $x_0$ to $x_i$.
Define $\pi_1(\partial_i\Sigma)$ as the cyclic subgroup of $\pi$ 
corresponding to $\pi_1\big(\partial_i\Sigma,x_i\big)$
and determined by the paths between $x_0$ and $x_i$.
Let $\AutS$ denote the subgroup of $\Aut(\pi)$ 
which preserves both the conjugacy classes of the 
subgroups $\pi_1\big(\partial_i(\Sigma)\big)$ 
and the orientation of $\Sigma$.
Then $\Gamma$ is  isomorphic to the image of $\AutS$ under the quotient
homomorphism
\begin{equation*}
\Aut(\pi) \longrightarrow \Out(\pi) := \Aut(\pi)/\Inn(\pi).
\end{equation*}


Let $G$ be an algebraic group over $\R$.
Then the set of homomorphisms $\pi \to G$ 
enjoys the the structure of an $\R$-algebraic set
denoted $\Hom(\pi,G)$.
Choose conjugacy classes $\CC_i \subset G$
for each $i=1,\dots,n$ and let $\CC := \{\CC_1,\dots,\CC_n\}$.
Denote by $\HomC$ the subset of $\Hom(\pi,G)$ comprising homomorphisms which send 
$\pi_1(\partial_i\Sigma)$ to $\CC_i$
 
The group $\Inn(G)$ of inner  automorphisms of $G$
acts on $\HomC$ by composition. 
Denote the resulting {\em relative character variety\/} by:
\[
\M_\CC(G) := \HomC/\Inn(G).
\]

The group $\AutS$ acts on $\pi$, 
and hence  on $\Hom_\CC(\pi,G)$ by composition.
Furthermore, the action descends to a $\Gamma$-action on $\MCC(G)$. 
The moduli space $\MCC(G)$ has an invariant dense open subset 
$\MCCo$ which is a smooth manifold. 
If, for example, $G$ is $\R$-reductive (see \cite{Go84, Go97}), 
this subset has an $\Gamma$-invariant symplectic structure $\Omega$.
In particular, 
$\MCCo$  admits a natural smooth $\Gamma$-invariant measure $\mu$.


This paper is part of the general program to understand the dynamics of the
action of $\Gamma$ and automorphism groups of free groups
on character varieties when the Lie group is compact.
Suppose that $K$ is a compact Lie group.
In \cite{Go97}, Goldman conjectures that $\Gamma$ acts ergodically on 
$\MCC(K)$ and shows this to be the case when $K$ is locally a product of 
$\SU(2)$'s and $\U(1)$'s.  
In \cite{PX02a} and \cite{PX03a}, the conjecture is proved for $g \geq 2$, and also established for almost all boundary classes when $g=1=n$.  In \cite{GX11}, Goldman and Xia offer an alternate and simpler proof for the case of $K = \SU(2)$.


In this paper, we consider the case $g = 1 = n$ with $K = \SUthree$.  
Then $\Gamma \cong \SL(2,\Z)$  (see \cite{FM12}). 
In this special case, $\MCC(K)$ is explicitly described by the 
{\em commutator map\/}
\begin{align*}
K \times K &\xrightarrow{~\kappa~}  K \\
(a,b) &\longmapsto [a,b] := aba\inv b\inv.
\end{align*}
%
%
%
Indeed, $\MCC(K) \cong \kappa\inv(\CC)/\Inn(K)$, 
for each conjugacy class $\CC \subset K$.
%


\begin{thm}\label{thm:main}
Let $K = \SU(3)$ and $\surf=\surf_{1,1}$. 
The $\Gamma$-action is ergodic on $\MCC(K)$ with 
respect to the measure $\mu$.
\end{thm}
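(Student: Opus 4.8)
The plan is to show that every measurable $\Gamma$-invariant function $F$ on $\MCC(K)$ is $\mu$-almost everywhere constant; since $\mu$ is finite this is equivalent to ergodicity. Write $\pi=\pi_1(\surf_{1,1})=\langle a,b\rangle\cong F_2$, so that, via the commutator map, $\MCC(K)\cong\kappa^{-1}(\CC)/\Inn(K)$ where $\kappa(A,B)=[A,B]$ and $\CC\subset K$ is the (generic) chosen conjugacy class. By \cite{FM12}, $\Gamma\cong\SLtwoZ$ is generated by the Dehn twists $\tau_a,\tau_b$ along the curves $a,b$; hence $\Gamma$ also contains the Dehn twist $\tau_{ab}$ along $ab$ (a $\Gamma$-conjugate of $\tau_a$). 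For a suitable orientation convention these act on representations by $\tau_a\colon(A,B)\mapsto(A,BA)$, $\tau_b\colon(A,B)\mapsto(AB,B)$ and $\tau_{ab}\colon(A,B)\mapsto(B^{-1},BAB)$, each preserving $\kappa$ and descending to $\MCC(K)$.

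Step one is the classical \emph{twist-flow} mechanism of \cite{Go97,PX02a,PX03a}. Since $\tau_a^{\,n}(A,B)=(A,BA^n)$ and $F$ is $\tau_a$-invariant, a Fourier-series argument on the torus $Z(A)^{\circ}$ (valid since $A$ is regular for $\mu$-a.e.\ $(A,B)$) shows that $F$ is invariant under right translation of $B$ by $\overline{\langle A\rangle}$, which moreover equals $Z(A)^{\circ}$ for $\mu$-a.e.\ $(A,B)$; hence $F$ is invariant under the \emph{$a$-twist flow} $\Phi_a\colon(A,B)\mapsto(A,B\exp\xi)$, $\xi\in\mathfrak{z}(A)$. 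The same reasoning applied to $\tau_b$ and to $\tau_{ab}$ (in the generating pair $(a,ab)$) gives invariance under the $b$-twist flow $\Phi_b\colon(A,B)\mapsto(A\exp\eta,B)$, $\eta\in\mathfrak{z}(B)$, and under the $ab$-twist flow $\Phi_{ab}$ associated with the torus $Z(AB)^{\circ}$. All three flows preserve $\kappa$ and descend to volume-preserving flows on the smooth locus $\MCCk$, whose dimension equals $\dim\CC=6$ for regular $\CC$.

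The heart of the proof is the \emph{accessibility} step: for $\mu$-almost every $[(A,B)]$ the tangent spaces to the orbits of $\Phi_a$, $\Phi_b$ and $\Phi_{ab}$ together span $T_{[(A,B)]}\MCCk$. Each of these orbits is $2$-dimensional at a point in general position, so the three of them account a priori for $2+2+2=6=\dim\MCCk$ directions, and the task is to show that these three $2$-planes are in general position (should they fail to span on an open set one would supplement with iterated Lie brackets of the distributions, but the genericity computation makes this unnecessary off a null set). This is where the structure of $\SU(3)$ enters concretely: one writes $A$, $B$, $AB$ in terms of their eigendata, expands the three families of twist directions in the root-space decomposition of $\su(3)$, and checks that the resulting $6\times 6$ coefficient matrix is nonsingular away from a proper real-analytic --- hence $\mu$-null --- subset. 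The non-generic loci (failure of regularity of $A$, $B$, $AB$ or $[A,B]$; degeneration of some $\overline{\langle\cdot\rangle}$ to a proper subtorus; the singular strata of $\MCC(K)$) are $\mu$-null and discarded. Granting this, the orbit of the pseudogroup generated by the three flows contains an open neighborhood of $\mu$-almost every point, so $F$ coincides $\mu$-a.e.\ with a locally constant function; as $\MCC(K)$ is connected for generic $\CC$, $F$ is $\mu$-a.e.\ constant, and Theorem~\ref{thm:main} follows.

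The main obstacle is the accessibility computation. For $\SU(2)$ the maximal tori are $1$-dimensional, two curves already account for $1+1=2=\dim\MCC(K)$, and the required independence is essentially automatic; for $\SU(3)$ the rank-$2$ tori leave far more room for the three $2$-planes to degenerate, and excluding this demands a genuine case analysis tied to the $\SU(3)$ root system, together with careful bookkeeping to confine every degeneracy --- non-regularity of $A$, $B$, $AB$, $[A,B]$, collapse of the subgroups $\overline{\langle\cdot\rangle}$, and the singular strata of $\MCC(K)$ --- to a $\mu$-null set. A secondary technical point is the passage between the algebraic quotient $\kappa^{-1}(\CC)/\Inn(K)$ and the smooth symplectic manifold $\MCCk$ carrying the measure $\mu$, and the connectivity of the generic relative character variety.
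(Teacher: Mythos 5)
Your overall strategy is the same as the paper's: Dehn twists act as rotations on the twist tori $\H(a,b)$, $\H'(a,b)$, so for generic twisting elements a $\tau$-invariant measurable function is invariant under the corresponding twist (Hamiltonian) flows, and one then wants the group generated by these flows to act (locally) transitively so that the function is a.e.\ constant. The genuine gap is that your ``heart of the proof'' --- the accessibility claim that for $\mu$-a.e.\ point the three twist $2$-planes along $\alpha$, $\beta$, $\alpha\beta$ span the $6$-dimensional tangent space --- is asserted, not proved: you describe a $6\times 6$ determinant in the root-space decomposition of $\su(3)$ and then grant its nonvanishing off a null set. This is precisely where the paper does something you cannot wave through: it invokes Lawton's theorem that the coordinate ring of the $\SL(3,\C)$-character variety of $\Ftwo$ is generated by the traces of $\alpha,\beta,\alpha\beta,\alpha\beta^{-1},[\alpha,\beta]$ and their inverses, and deduces infinitesimal transitivity of the real and imaginary parts of these trace functions on all of $\M_c$ (Corollary~\ref{prop:IT g=1}, via Lemma~3.1 of \cite{GX11}), then applies Proposition~\ref{prop:Ham}. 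Note that the fourth curve $\alpha\beta^{-1}$ is genuinely needed for ring generation in rank $3$; your spanning claim amounts to the assertion that $[\rho]\mapsto(\f_\alpha,\f_\beta,\f_{\alpha\beta})$ is a.e.\ a submersion $\M_c\to\C^3$, which does not follow from anything you write and is exactly the nontrivial $\SU(3)$ phenomenon the proof must address. As it stands, the key lemma of your argument is missing.

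There are two further gaps. First, you need genericity (not mere regularity) of $A$, resp.\ $B$, $AB$, for $\mu$-a.e.\ point \emph{of the fiber} $R_c$: the non-generic set is null in $K$, but $R_c$ has positive codimension in $K\times K$, so ``the bad loci are null and discarded'' is not a proof. The paper devotes real work to this: Proposition~\ref{prop:submersion} exhibits a smooth point at which the trace of the second coordinate restricted to the twist torus $\H(a,b)$ is a submersion onto $\Delta$, and combines this with Sard, with conullity of generic traces in $\Delta$ (Proposition~\ref{cor:GenericityConullInDelta}), and with connectedness/irreducibility of the smooth locus of $\M_c$ obtained through the identification with moduli of parabolic bundles (Proposition~\ref{prop:connected}), to get Corollary~\ref{cor:generic2}. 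Second, your standing assumption that $\CC$ is a generic (regular) class excludes cases the theorem covers: $c=\I$, where the representations are abelian, the twist-flow mechanism collapses, and the paper instead uses ergodicity of the hyperbolic $\SL(2,\Z)$-action on $(\bT^2\times\bT^2)/W$; $c=\omega\I$ with $\omega\neq1$, where $\M_c$ is a single point (Proposition~\ref{prop:single point}); and noncentral but non-regular classes, where $\dim\M_c<6$ and your $2+2+2$ count no longer matches, while the paper's infinitesimal-transitivity route applies to every noncentral $c$ uniformly. These cases must be handled, as the paper does, for the theorem as stated.
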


We prove this theorem along the lines of the main results in \cite{GX11}.  If $\rho \in \HomC$, we denote its $\Inn(G)$-equivalence class as $[\rho]$.  Similarly if $S \subseteq \HomC$, then the corresponding set of $\Inn(G)$-equivalence classes is denoted by $[S]$.
A simple closed curve $\alpha$ on $\Sigma$
defines a function 
\begin{align*}
\MCC(K) &\xrightarrow{~\f_\alpha~} \C \\
[\rho] &\longmapsto \Tr\big(\rho(\alpha)\big).
\end{align*} 
The symplectic structure $\Omega$ together with the real and imaginary parts of $\f_\alpha$ give rise to Hamiltonian flows.  
The ring-theoretical results in \cite{La07} imply the algebra of Hamiltonian
vector fields is {\em infinitesimally\/} transitive.
It follows that the group generated by these flows is {\em locally\/} transitive, and hence, ergodic.

Depending on the choice of $\alpha$, these Hamiltonian flows preserve the sets $[\H(a, b)]$ or $[\H'(a,b)]$ defined in Section~\ref{subsec:DH}.
On the other hand, $\Gamma$ contains the Dehn twist $\tau_\alpha$ along $\alpha$.  The $\tau_\alpha$ action also preserves $[\H(a,b)]$ and is ergodic in almost all $[\H(a,b)]$. 
It follows that a $\mu$-measurable function is
invariant under $\tau_\alpha$ if and only if it 
is invariant under the Hamiltonian flows associated with $\f_\alpha$.
By local transitivity, such a measurable function is almost everywhere constant.


\subsection*{Acknowledgements}
Goldman and Lawton were partially supported by U.S. National Science Foundation grants: DMS 1107452, 1107263, 1107367 1309376 ``RNMS: Geometric structures And Representation varieties" (the GEAR Network).  Lawton acknowledges support from U.S. National Science Foundation grants: DMS 1309376, and also DMS 0932078000 while he was in residence at the Mathematical Sciences Research Institute in Berkeley, California during the Spring 2015 semester.  He was also partially supported by a Simons Foundation Collaboration grant. Xia was partially supported by the Ministry of Science and Technology Taiwan with grants 103-2115-M-006-007-MY2, 105-2115-M-006-006, 106-2115-M-006-008 and 107-2115-M-006-009 and the National Center for Theoretical Sciences, Taiwan.

\section*{Notation and terminology}

%
Let $G$ be a group.  Denote the inner automorphism induced by $A\in G$ by:
\begin{align*}
G &\xrightarrow{~\Inn(A)~} G \\
B  &\longmapsto ABA\inv
\end{align*}
The set of conjugacy classes in $G$ equals the quotient $G/\Inn(G)$,
and we denote the image of a subset $S\subseteq G$ under the quotient map
by $[S]$.
Denote the {\em centralizer\/} of $A$ in $G$ by:
\[ G_A := \Fix\big(\Inn(A)\big) \subgroup G, \]
where $\Fix(S)$ is the set of fixed points of $S \subseteq G$.

Define the commutator map of $G$:
\begin{align*}
G \times G &\xrightarrow{~\kappa~} G \\
(A,B) &\longmapsto [A,B] := ABA^{-1}B^{-1}. \end{align*}


Suppose further that $G$ is a 
Lie %
group  with Lie algebra $\g$,
and denote the adjoint representation of $G$ on $\g$ by $\Ad$.
Identify $\g$ with the Lie algebra of {\em right-invariant vector fields\/} on $G$;
then for any right-invariant vector field $X \in \g$,
the element $\Ad(a)(X)$ equals the image of $X$ under left-multiplication
by $a$.
Denote the {\em centralizer\/} of $a$ in $\g$ by:
\[ \g_a := \Fix\big(\Ad(a)\big) \subseteq \g. \]


Denote the trace of a matrix $a$ by $\Tr(a)$
and the $\lambda$-eigenspace of a matrix $a$ by $\Eig_\lambda(a)$,
for a scalar $\lambda\in\C$.

The notation $K$ and $\fk$ are reserved for compact Lie groups and its Lie algebra, respectively.


If $(M,\Omega)$ is a symplectic manifold,  and 
$M \xrightarrow{~f~} \R$ is a smooth function,
denote its {\em Hamiltonian vector field\/} by $\Ham(f)$.
We denote the tangent space to a smooth manifold $M$ at a point $p\in M$
by $T_p M$.

When we say a set is {\em closed}, we mean it to be closed in the 
classical topology.





\section{Character varieties and the mapping class group}\label{sec:char}
We fix a basepoint on the boundary of $\surf : = \surf_{1,1}$.
The fundamental group $\pi := \pi_1(\surf)$ is isomorphic to the rank 2 free group $\Ftwo$, generated
by homotopy classes of
oriented based loops $\alpha$ and $\beta$. 
We often do not distinguish elements in $\pi$ from corresponding oriented based loops on $\surf$.

We write
$$
\pi = \la \alpha, \beta, \sigma | \kappa(\alpha, \beta) = \sigma \ra,
$$
where $\sigma$ is the boundary element.
In this way, we have
$$
R := \Hom(\pi, K) \cong K \times K \ \ \ \text{ and } \ \ \ \M := R/K,
$$
where the $K$-action is by conjugation.

In our case, we have only one boundary circle and we let
$\CC \subseteq K$ be a conjugacy class and $c \in \CC$.  Then the relative representation variety and character variety are
$$
R_c : = \Hom_\CC(\pi,K) := \kappa^{-1}(c) \ \text{ and } \ \M_c := R_c(K)/K_c.
$$
Again, the $K_c$-action is by conjugation.
In this way, a representation $\rho \in R_c$ corresponds to $(a,b) \in K \times K$ such that $\kappa(a,b) = c$.
Notice that $\M_c$ is usually and equivalently defined as
$$
\M_c = \kappa^{-1}(\CC)/K.
$$
The space $\M_c$ has a natural symplectic structure $\Omega$ \cite{Go84, Go97}.

The diffeomorphism group of $\Sigma$ (fixing the boundary, hence, also the basepoint) acts on $\pi$ and this action descends to a $\Gamma$-action on $\pi$, fixing the conjugacy class of $\sigma$.
This further induces an action:
\begin{align*}
\M_c \times \Gamma &\longrightarrow \M_c \\
([\rho], \gamma) &\longmapsto [\rho \circ \gamma].
\end{align*}
The $\Gamma$-action leaves $\Omega$ and $\mu$ invariant \cite{Go97}.
For any oriented simple closed curve $\alpha$ on $\surf$, denote by $\tau_\alpha$ the Dehn twist along $\alpha$.
The mapping class group $\Gamma$ %
contains all 
Dehn twists; %
indeed the Dehn twists {\em generate\/} $\Gamma$
(although we do not need this fact).
Denote by $\Scal$ the set of homotopy classes of oriented simple closed curves on $\Sigma$.

\section{Compact Lie groups}
This section reviews well known facts that are used in the proofs.
{\em Generic elements\/} are introduced; these are regular elements
which are dense in their maximal tori,
and provide nontrivial dynamics.

\subsection{Regularity}

Suppose $M$ is an irreducible algebraic set over $\R$ or $\C$ and $M^s \subset M$ its singular locus.
Then $U  = M \setminus M^s$ is a smooth manifold and Zariski dense in $M$.
The smooth structure on $U$ gives rise to the Lebesgue measure class on $U$ and on $M$,
by assigning $M^s$ to be a null set.
We shall always mean this class, which coincides with the measure class discussed in the introduction \cite{Hu95}.

Let $G$ be a linear 
semisimple algebraic group over $\C$ of rank $r$
and $K < G$ a maximal compact subgroup.
The corresponding Lie algebras are denoted by $\fk$ and $\g$, respectively.

Recall that an element $a \in K$ is {\bf regular} if $K_a$ has dimension $r$.
In general, $\dim(K_a) \ge r$, and $K_a$  contains a maximal torus of $K$.
An element $a \in K$ is regular if and only if $K_a$ is a maximal torus (that is, a Cartan subgroup) in $K$.

Recall that an action on a topological space is {\em minimal\/} if every orbit is dense.
If $a\in K$, denote the Haar measure on $K_a$ by
$\mu_{K_a}$ and the pushforward  $(L_b)_*\mu_{a}$ under left-multiplication
$K_a \xrightarrow{~L_b~} bK_a$ by $\mu_{ba}$.

\subsection{Genericity}
In general regularity is too weak a notion for dynamical complexity.
We introduce a notion of {\em genericity\/} which is more useful for
constructing nontrivial dynamics.

Let $(a,b)\in K\times K$.
Then the cyclic group $\la a\ra$ acts on the left-coset $b K_a$ by:
\[
b\zeta \xmapsto{~a^n~}  b\zeta a^n \]
where $n\in\Z$ and $\zeta\in K_a$.

\begin{prop}\label{prop:irrational rotation}
Let $a\in K$ be a regular element.
For any $b\in K$,
the following conditions are equivalent:
\begin{itemize}
\item the cyclic group $\la a \ra < K_a$ is Zariski dense in $K_a$;
\item the cyclic group $\la a \ra < K_a$ is dense in $K_a$;
\item the action of $\la a\ra$ on $bK_a$ is minimal;
\item the action of $\la a\ra$ on $(bK_a,\mu_{ba})$ is ergodic.
\end{itemize}
\end{prop}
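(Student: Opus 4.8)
The plan is to prove the cycle of equivalences by establishing two genuinely different facts and then routing the remaining implications through elementary reductions. The first fact is the equivalence of the two algebraic/topological density statements about $\la a\ra$ inside $K_a$; the second is the transfer of that density to the dynamics on the coset $bK_a$. I would organize the argument as: (Zariski density) $\Leftrightarrow$ (density), then (density in $K_a$) $\Rightarrow$ (minimality on $bK_a$) $\Rightarrow$ (ergodicity on $(bK_a,\mu_{ba})$) $\Rightarrow$ (density in $K_a$), closing the loop.

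For the first equivalence, recall that since $a$ is regular, $K_a$ is a torus $T$ (a Cartan subgroup), and $a$ lies in it. A cyclic subgroup of a compact torus $T$ is dense (in the classical topology) if and only if its closure is all of $T$; but the closure of $\la a\ra$ in a compact abelian Lie group is always a closed subgroup $H$, and $H$ is then a real algebraic subgroup of $T$ whose identity component is a subtorus. The Zariski closure of $\la a\ra$ visibly contains $H$ and is contained in $T$; conversely, if $\la a\ra$ is Zariski dense in $T$ then $H$ cannot be a proper algebraic subgroup, so $H=T$. Thus the two closures coincide with $T$ simultaneously. (Concretely, writing $T\cong (\R/\Z)^r$ and $a=(\theta_1,\dots,\theta_r)$, both conditions are equivalent to $1,\theta_1,\dots,\theta_r$ being linearly independent over $\Q$ — the standard Kronecker/Weyl criterion.)

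Next, the passage from $\la a\ra$ acting on itself by translation to $\la a\ra$ acting on the coset $bK_a = bT$: left-translation by $b$ is a homeomorphism $T\to bT$ intertwining the right-translation action of $\la a\ra$ on $T$ with the given action $b\zeta\mapsto b\zeta a^n$ on $bT$, and it carries $\mu_{a}$ to $\mu_{ba}$ by definition. So minimality (resp. ergodicity) of $\la a\ra$ on $bT$ is literally equivalent to minimality (resp. ergodicity) of $\la a\ra$ acting by translation on $(T,\mu_T)$. For a translation action of a cyclic group on a compact abelian group, minimality is equivalent to density of the subgroup (every orbit is a translate of $\la a\ra$), and density of $\la a\ra$ implies unique ergodicity, hence ergodicity, for translation on a compact group with Haar measure. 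Finally, ergodicity implies minimality-type density here because any proper closed subgroup $H\subsetneq T$ would give a nontrivial invariant set (a union of $H$-cosets of positive, non-full measure), so ergodicity forces the orbit closure, a closed subgroup, to be all of $T$ — which is exactly density of $\la a\ra$. This closes the four-way equivalence.

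The main obstacle, such as it is, is bookkeeping rather than depth: one must be careful that "ergodic" is with respect to the Haar measure $\mu_{ba}$ and not some other measure, and that the translation action in question is by a single element (so ergodicity is of the $\Z$-action generated by $a$, i.e. the classical statement that an irrational rotation of a torus is ergodic, in its multidimensional Kronecker form). The only place genuine structure is used is the regularity of $a$, which guarantees $K_a$ is abelian (a torus); without that the coset $bK_a$ would not be a translate of an abelian group and the translation-dynamics picture would break down. I would cite the standard ergodicity of Kronecker flows / translations on compact abelian groups for the $\Rightarrow$ ergodic step, and a Haar-measure-of-cosets argument for the converse.
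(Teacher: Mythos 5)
Your proposal is correct and takes essentially the same route as the paper: the crux is the equivalence of Zariski and classical density of $\la a\ra$ in the torus $K_a$ (the paper's Lemma~\ref{lem:Density}, proved there via Zariski-closedness of compact linear groups, which is the same mechanism as your ``closed subgroup of a torus is real algebraic'' step and your Kronecker criterion). The remaining equivalences are the standard facts that dense subgroups of a compact torus act minimally and ergodically, which the paper simply cites (Katok--Hasselblatt, Walters) while you prove them directly, including the transfer to $bK_a$ by left translation and the quotient-by-$\overline{\la a\ra}$ argument that closes the loop; this is a harmless elaboration rather than a different approach.
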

In this case, we say that $a$ is {\em generic.\/}
The proof of Proposition~\ref{prop:irrational rotation} uses standard facts about compact abelian Lie groups,
such as:
\begin{lem}\label{lem:Density}
A cyclic subgroup of $K_a$ is dense in the classical topology if and only if it is dense in the Zariski topology.
\end{lem}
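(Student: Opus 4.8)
The plan is to reduce the statement to the well-understood structure of closures of cyclic subgroups in a compact abelian Lie group. Since $a$ is regular, $K_a$ is a maximal torus $T$ of $K$, hence $T \cong (\R/\Z)^r$ as a real Lie group and, as a real algebraic group, $T$ is the group of real points of an algebraic torus. Let $H = \la a \ra \le T$ be a cyclic subgroup. Write $\overline{H}^{\mathrm{cl}}$ for its closure in the classical topology and $\overline{H}^{\mathrm{Zar}}$ for its Zariski closure in $T$. One inclusion is immediate and needs no regularity: the Zariski topology is coarser than the classical one on the real points of an affine variety, so $\overline{H}^{\mathrm{cl}} \subseteq \overline{H}^{\mathrm{Zar}}$; in particular classical density implies Zariski density.

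For the reverse implication, suppose $H$ is Zariski dense in $T$; I must show $\overline{H}^{\mathrm{cl}} = T$. The key point is that $\overline{H}^{\mathrm{cl}}$ is a \emph{closed subgroup} of the compact Lie group $T$, hence is itself a closed Lie subgroup, and therefore its identity component $\overline{H}^{\mathrm{cl},\,0}$ is a subtorus $T'$ of $T$ and $\overline{H}^{\mathrm{cl}}$ is a finite union of cosets of $T'$. Any such finite union of cosets of a subtorus is a Zariski-closed subset of $T$ (a subtorus is cut out by finitely many characters, i.e.\ by equations of the form $\chi = 1$, and a coset by $\chi = \zeta$ for a root of unity $\zeta$; finitely many cosets is a finite union of such, hence still Zariski closed). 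Thus $\overline{H}^{\mathrm{cl}}$ is a Zariski-closed subset containing $H$, so it contains $\overline{H}^{\mathrm{Zar}} = T$, giving $\overline{H}^{\mathrm{cl}} = T$ as desired.

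The one step that genuinely requires attention is the claim that a finite union of cosets of a subtorus is Zariski closed in $T$; the rest is essentially formal. I would justify it by choosing a splitting $T \cong T' \times F$ up to isogeny, or more cleanly by working with the character lattice: the subtorus $T'$ is the common kernel of the characters in a saturated sublattice, and each connected component of $\overline{H}^{\mathrm{cl}}$ is a translate of $T'$ by a torsion element, cut out by setting those characters equal to the appropriate roots of unity. I expect no serious obstacle here — the main subtlety is simply being careful that "closed subgroup of a Lie group is a Lie subgroup" (Cartan's theorem) is what upgrades the a priori merely topological closure into a manifold with finitely many components, which is what makes the Zariski-closedness argument go through.
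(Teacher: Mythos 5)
Your proposal is correct, and its skeleton matches the paper's proof: in both arguments the easy direction is that the Zariski topology is coarser than the classical one, and the converse is obtained by showing that the classical closure $H$ of $\la a\ra$ is a Zariski-closed subgroup of $K_a$, so that Zariski density of $\la a\ra$ forces $H = K_a$. Where you differ is in how Zariski-closedness of $H$ is established. The paper simply invokes the general fact that every compact linear group is Zariski closed (Onishchik--Vinberg, \S 4.4, Theorem~5), applied to the closed subgroup $H \subseteq K_a$; this requires no structure theory of $H$ and does not even use that $K_a$ is a torus. You instead prove the needed closedness by hand: Cartan's closed-subgroup theorem makes $H$ a Lie subgroup, hence a finite union of cosets of a subtorus $T'$, and each such coset is Zariski closed --- your character-lattice and torsion-translate bookkeeping works (divisibility of $T'$ does give torsion coset representatives), though it can be shortened by noting that translation by any group element is an algebraic automorphism of $K_a$, so a coset of the Zariski-closed subtorus $T'$ is automatically Zariski closed. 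The trade-off: your route is self-contained and elementary but leans on the regularity of $a$ so that $K_a$ is a maximal torus, whereas the paper's citation-based argument is shorter and applies verbatim to any compact subgroup of a linear group; both are valid.
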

\begin{proof}
Any set that is classically dense is also Zariski dense since the Zariski topology is coarser than the classical topology. We now show the converse.
Clearly the cyclic group $\la a\ra \subset K_a$, and its Zariski-closure is also an abelian subgroup.
Its closure in the classical topology
\[ H :=\overline{\la a\ra} \] is a closed abelian subgroup of $K_a$.
Now every compact linear group is Zariski-closed
(See Onishchik-Vinberg~\cite{OV90}, \S 4.4, Theorem~5, pp.133--134).  Hence $H$ is Zariski closed in $K_a$.  Since ${\la a\ra}$ is Zariski dense in $K_a$ and $H \supseteq {\la a \ra}$, $H=K_a$.


\end{proof}

\begin{proof}[Proof of Proposition~\ref{prop:irrational rotation}]
The proof now follows from Lemma~\ref{lem:Density} and
the fact that dense subgroups of the torus act minimally
(see Katok-Hasselblatt \cite [\S 1.4 (p.28) ]{KH95})
and ergodically (\cite[Proposition 4.2.2 (p.147)]{KH95},
or Walters \cite[Theorem 1.9 (p.30)]{Wa82}.
\end{proof}

\section{Infinitesimal transitivity and Hamiltonian flows}\label{sec:IT}
In this section, we let $G$ be a semi-simple complex algebraic Lie group.
Let $M$ be a symplectic manifold and $M \xrightarrow{~f~} \R$ a smooth function.  
Denote by $\Ham(f)$ the associated Hamiltonian vector field.

\begin{defin}
Let $M$ be a manifold and $\Fm$ be a set of
real smooth $\R$-functions on $M$ such that at $x \in M$, the
differentials $df(x)$, for $f\in\Fm$, span the cotangent space
$T_x^*(M)$. Then $\Fm$ is said to be infinitesimally transitive at $x$.  $\Fm$ is infinitesimally transitive on $M$ if
$\Fm$ is infinitesimally transitive at all $x \in M$.
\end{defin}

\begin{prop}\label{prop:Ham}
Let $M$ be a connected symplectic manifold and $\Fm$ be infinitesimally transitive on $M$.
Then the group $\mH$ generated by the Hamiltonian flows
$\Ham(f)$ of the vector fields $\Ham(f)$, for $f\in\Fm$, acts
transitively on $M$.
\end{prop}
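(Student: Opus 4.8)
The plan is to show that the orbit of the group $\mH$ through any point $x_0 \in M$ is open, and then use connectedness of $M$ to conclude it is all of $M$. The key local input is the infinitesimal transitivity hypothesis, which says precisely that the Hamiltonian vector fields $\{\Ham(f) : f \in \Fm\}$ span the tangent space $T_xM$ at every $x$; indeed, since $\Omega$ is nondegenerate, the map $f \mapsto \Ham(f)$ sends the span of the differentials $df(x)$ in $T_x^*M$ isomorphically onto the span of the $\Ham(f)(x)$ in $T_xM$, so $\Fm$ infinitesimally transitive at $x$ is equivalent to $\{\Ham(f)(x) : f \in \Fm\}$ spanning $T_xM$.

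First I would fix $x_0$ and consider the orbit map. Choose finitely many functions $f_1, \dots, f_m \in \Fm$ (with $m = \dim M$) whose Hamiltonian vector fields are linearly independent at $x_0$; by continuity they remain linearly independent on a neighborhood of $x_0$. Let $\varphi_i^t$ denote the (locally defined) time-$t$ flow of $\Ham(f_i)$. Then consider the map
\[
F(t_1, \dots, t_m) := \varphi_1^{t_1} \circ \varphi_2^{t_2} \circ \cdots \circ \varphi_m^{t_m}(x_0),
\]
defined for $(t_1, \dots, t_m)$ in a neighborhood of the origin in $\R^m$. A standard computation shows that the differential of $F$ at the origin sends $\partial/\partial t_i$ to $\Ham(f_i)(x_0)$, hence $dF_0$ is an isomorphism onto $T_{x_0}M$. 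By the inverse function theorem, $F$ is a local diffeomorphism near the origin, so its image contains an open neighborhood of $x_0$. Since every $\varphi_i^{t_i}$ lies in $\mH$ (note the flows are complete on their orbits in the sense needed, or one composes short-time flows), the $\mH$-orbit of $x_0$ contains an open neighborhood of $x_0$.

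Next I would promote this to: every $\mH$-orbit is open. This is immediate because if $y = h \cdot x_0$ for $h \in \mH$, then $h$ is a diffeomorphism of $M$ carrying the $\mH$-orbit of $x_0$ to itself (orbits are $\mH$-invariant and $h \in \mH$), and it carries an open neighborhood of $x_0$ to an open neighborhood of $y$; hence the orbit is a neighborhood of each of its points. Finally, $M$ is partitioned into $\mH$-orbits, each of which is open; since $M$ is connected, there is exactly one orbit, i.e. $\mH$ acts transitively.

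The main technical point to be careful about — and the only real obstacle — is the issue of completeness of the Hamiltonian flows: $\Ham(f)$ need not be a complete vector field on a general symplectic manifold, so $\varphi_i^{t_i}$ may only be defined for small $t_i$ and near $x_0$. This is harmless for the argument above: openness of the orbit only requires short-time, locally-defined flows, and the composition $F$ is defined on a small enough polydisc; the group $\mH$ should be understood as generated by all the (partially defined) time-$t$ maps of the $\Ham(f)$, and the orbit of a point is traced out by finite compositions of such local flows applied successively, which is well-defined pointwise. I would state this carefully at the outset to avoid any confusion, and the rest is the routine inverse-function-theorem argument sketched above.
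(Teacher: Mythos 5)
Your argument is correct, and it is essentially the standard proof that the paper delegates to the citation of Lemma 3.2 in \cite{GX11}: infinitesimal transitivity plus nondegeneracy of $\Omega$ gives that the fields $\Ham(f)$ span each tangent space, the inverse function theorem applied to a composition of local flows makes every $\mH$-orbit open, and connectedness of $M$ forces a single orbit. One minor polish: rather than invoking ``$h$ is a diffeomorphism of $M$'' (delicate when the flows are incomplete), just note that infinitesimal transitivity holds at \emph{every} point, so the same inverse-function-theorem argument applied at an arbitrary point $y$ of an orbit shows directly that the orbit is a neighborhood of $y$.
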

\begin{proof}
See Lemma 3.2 in \cite{GX11}.
\end{proof}

We now briefly review results of Goldman~\cite{Go86},
describing the flows generated by
the Hamiltonian vector fields by {\em simple\/}  closed curves on $\Sigma$.
In this case, the local flow of this vector field on $\M_c$
lifts to a flow on the representation variety $R_c$.
Furthermore this flow admits a simple description~\cite{Go86}
as follows.
\subsection{Invariant functions and flows on groups}
Let $G$ be a semi-simple complex Lie group with Lie algebra $\g$.
Then the adjoint representation $\Ad$ preserves a nondegenerate symmetric
bilinear form $\langle \_ , \_ \rangle$ on $\g$. In the case $G=\SL(n,\C)$, this is
\begin{equation*}
\langle X, Y\rangle := \Tr (X Y).
\end{equation*}

Let $G\xrightarrow{\f} \R$ be a function invariant under the inner automorphisms $\Inn(G)$.
Following \cite{Go86}, we describe how
$\f$ determines a way to associate to every element $x\in G$
a one-parameter subgroup
\begin{equation*}
\zeta^t(x) \, = \, \exp\big( t F(x) \big)
\end{equation*}
centralizing $x$.
Given  $\f$, define  its {\em variation function\/}
$ G \xrightarrow{F} \g$ by:
\begin{equation*}
\langle F(x), \upsilon \rangle = \frac{d}{dt}\bigg|_{t=0}
\f \big( x \exp(t\upsilon)\big)
\end{equation*}
for all $\upsilon\in\g$.
Invariance of $\f$ under $\Inn(G)$ implies that $F$ is $G$-equivariant:
\begin{equation*}
F( g x g^{-1}) = \Ad(g) F(x).
\end{equation*}
Taking $g = x$ implies that the one-parameter subgroup
\begin{equation}\label{eq:oneparameter}
\zeta^t(x) := \exp(t F(x))
\end{equation}
lies in the centralizer of $x\in G$.

Intrinsically, $F(x)\in\g$ is dual (by  $\langle \_, \_\rangle$) to the
element of $\g^*$ corresponding to
the left-invariant 1-form  on $G$ extending the covector
$df(x) \in T^*_x(G)$.

\subsection{Invariant functions and centralizing one-parameter subgroups}

\label{ssec:hamilton}
Recall that $\Scal$ denotes the set of homotopy classes of oriented simple closed curves on $\Sigma$.
If $\alpha \in \Scal$ is an oriented homotopy class of based loops,
then $\f_\alpha$, the {\em trace function\/} of $\alpha$, is defined as:
\begin{align*}
\Hom(\pi,G) & \xrightarrow{~\f_\alpha~} \C \\
\rho & \longmapsto  \Tr\big(\rho(\alpha)\big).
\end{align*}
Since the function $G \xrightarrow{~\Tr~} \C$ is $\Inn(G)$-invariant,
$\f_\alpha$ defines a $\C$-valued function (also denoted by $\f_\alpha$) on $\M_c$.
Let 
$$\f_\alpha^R = \Re(\f_\alpha),  \ \ \ \f_\alpha^I = \Im(\f_\alpha).$$
Then $\f_\alpha^R$ and $\f_\alpha^I$ define $\R$-valued functions on $\M_c$.  


Let $\alpha \in \Scal$ and $\Sigma | \alpha$ denote the compact surface  
obtained by {\em splitting\/} $\Sigma$ along $\alpha$. 
The two components $\alpha_\pm$ of $\partial \Sigma | \alpha$ corresponding to $\alpha$
are the preimages of $\alpha\subset\surf$ under the quotient mapping $\Sigma|\alpha \longrightarrow\surf$.
The original surface $\Sigma$ may be reconstructed as a quotient
space under the identification of $\alpha_-$ with $\alpha_+$.

The fundamental group $\pi$ can be
reconstructed from the fundamental group $\pi_1(\Sigma | \alpha)$ as
an HNN-extension:
\begin{equation}\label{eq:hnn}
\pi \;\cong\;   \bigg(\pi_1(\Sigma | \alpha) \amalg
\langle\beta\rangle \bigg)
\bigg/
\bigg(\beta \alpha_- \beta^{-1} = \alpha_+ \bigg).
\end{equation}
A representation $\rho$ of $\pi$ is determined by:
\begin{itemize}
\item the restriction $\rho'$ of $\rho$ to the subgroup
$\pi_1(\Sigma|\alpha)\subset\pi$, and
\item the value $\beta' = \rho(\beta)$
\end{itemize}
which satisfies:
\begin{equation}\label{eq:hnnrep}
\beta'  \rho'(\alpha_-) \beta'^{-1} = \rho'(\alpha_+).
\end{equation}
Furthermore any pair $(\rho',\beta')$ where $\rho'$ is a representation of
$\pi_1(\Sigma|\alpha)$ and $\beta'\in G$ satisfies \eqref{eq:hnnrep}
determines a representation $\rho$ of $\pi$.

Let $\f = \f_\alpha^R$ or $\f = \f_\alpha^I$.  
Define the {\em twist flow\/} $\xi_\alpha^t$, associated with $\f$ on $\Hom(\pi,\SU(3))$:
\begin{equation}\label{eq:hamtwist1}
\xi_\alpha^t(\rho):\gamma \longmapsto \begin{cases}  \rho(\gamma)
& \text{if}~ \gamma \in \pi_1(\Sigma|\alpha) \\
\rho(\beta) \zeta^t\big(\rho(\alpha_-)\big)
& \text{if}~ \gamma = \beta. \end{cases}
\end{equation}
where $\zeta^t$ is defined in \eqref{eq:oneparameter}.
This flow covers the flow generated
by $\Ham(\f)$ on $\M_c$
(See \cite{Go86}).

\subsection{Infinitesimal transitivity}
Let $\XX$ be the Geometric Invariant Theory quotient of  $\Hom\big(\Ftwo,\SL(3,\C)\big)$ by 
$\Inn\big(\SL(3,\C)\big)$.
Choose $\alpha, \beta \in \Scal$ as in Section \ref{ssec:hamilton} to correspond to curves with geometric
intersection number $1$ (or equivalently a free basis of $\Ftwo$).
Let
\[ \Gg := 
\{\alpha,\beta,\alpha \beta,\alpha\beta\inv,\alpha\beta\alpha\inv\beta\inv\}\subset \Scal \]
and 
$\Gg\inv := \{\gamma\inv : \gamma\in\Gg \}$. 
Let  \[\Fm_{\alpha,\beta} := \{ \f_\gamma : \gamma\in\Gg\cup \Gg\inv\}. \]

\begin{thm}[Lawton~\cite{La06,La07}]\label{thm:top of SU(3) rep}
The coordinate ring $\C[\XX]$ is generated by $\Fm_{\alpha,\beta}$.
\end{thm}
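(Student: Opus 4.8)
The plan is to deduce this from the classical invariant theory of a pair of $3\times 3$ matrices, transported to $\SL(3,\C)$ by the Cayley--Hamilton theorem. Since $\SL(3,\C)$ is reductive and $\XX$ is its GIT quotient, $\C[\XX]$ is the ring of simultaneous‑conjugation invariants $\C[\SL(3,\C)^2]^{\SL(3,\C)}$; as the center of $\GL(3,\C)$ acts trivially, this equals $\C[\SL(3,\C)^2]^{\GL(3,\C)}$. Because $\SL(3,\C)^2$ is a closed $\GL(3,\C)$-stable subvariety of the affine space $M_3(\C)^2$ and the group is reductive, the Reynolds operator shows that the restriction homomorphism
\[
\C[M_3(\C)^2]^{\GL(3,\C)} \twoheadrightarrow \C[\SL(3,\C)^2]^{\GL(3,\C)}
\]
is surjective. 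It therefore suffices to begin from a finite generating set of the left‑hand ring and verify that its image lies in the subalgebra generated by $\Fm_{\alpha,\beta}$. By the first fundamental theorem for matrix invariants (Procesi) this ring is generated by traces of words in $X,Y$, and in the rank‑two case Teranishi produced an explicit generating set: the traces of the eleven words
\[
X,\quad Y,\quad X^2,\quad Y^2,\quad XY,\quad X^3,\quad Y^3,\quad X^2Y,\quad XY^2,\quad X^2Y^2,\quad X^2Y^2XY.
\]

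On $\SL(3,\C)$ one has $\det X = 1$ and the second elementary symmetric function of the eigenvalues of $X$ is $\Tr(X\inv)$, so the characteristic polynomial yields
\[
X^3 = \Tr(X)\,X^2 - \Tr(X\inv)\,X + I, \qquad X^2 = \Tr(X)\,X - \Tr(X\inv)\,I + X\inv,
\]
and likewise for $Y$; moreover $\Tr(X^2) = \Tr(X)^2 - 2\Tr(X\inv)$ and $\Tr(X^3) = \Tr(X)^3 - 3\Tr(X)\Tr(X\inv) + 3$. Substituting these rewrites each of $\Tr(X^2), \Tr(X^3), \Tr(Y^2), \Tr(Y^3), \Tr(X^2Y), \Tr(XY^2), \Tr(X^2Y^2)$ as an explicit polynomial in
\[
\Tr(X),\ \Tr(X\inv),\ \Tr(Y),\ \Tr(Y\inv),\ \Tr(XY),\ \Tr((XY)\inv),\ \Tr(XY\inv),\ \Tr(X\inv Y),\ \Tr(X\inv Y\inv).
\]
Under the identification $X\mapsto\rho(\alpha)$, $Y\mapsto\rho(\beta)$ and the cyclic invariance of the trace, every one of these nine functions is some $\f_\gamma$ with $\gamma\in\Gg\cup\Gg\inv$ (for instance $\Tr((XY)\inv) = \f_{(\alpha\beta)\inv}$ and $\Tr(X\inv Y) = \Tr(YX\inv) = \f_{(\alpha\beta\inv)\inv}$), so all of them lie in $\Fm_{\alpha,\beta}$.

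The one remaining generator, the degree‑six invariant $\Tr(X^2Y^2XY)$, is the crux. Replacing $X^2$ and $Y^2$ by the quadratic expressions above and expanding, I would write $\Tr(X^2Y^2XY)$ as a $\Z[\Tr X,\Tr X\inv,\Tr Y,\Tr Y\inv]$-combination of traces of words of length $\le 4$ in $X^{\pm1},Y^{\pm1}$. Most resulting terms are already disposed of: $\Tr(X^2Y)$, $\Tr(XY^2)$, $\Tr(XY)$, $\Tr(X)$, $\Tr(Y)$, the square $\Tr((XY)^2) = \Tr(XY)^2 - 2\Tr((XY)\inv)$ (Cayley--Hamilton applied to $XY$), and the commutator term $\Tr(X\inv Y\inv XY) = \Tr(XYX\inv Y\inv) = \f_{[\alpha,\beta]}$ by cyclic permutation. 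What survives are the two \emph{non‑palindromic} length‑four traces $\Tr(XY\inv XY)$ and $\Tr(X\inv YXY)$, which are not conjugation‑equivalent to any element of $\Gg\cup\Gg\inv$ and cannot be collapsed by further substitution of $X^{\pm1}$ or $Y^{\pm1}$ alone; reducing them is precisely where the polarized Cayley--Hamilton (``fundamental trace'') identities for $3\times 3$ matrices must be invoked, and this is the genuinely technical step — carried out, with some machine assistance, in \cite{La06,La07} — which also brings in $\f_{[\alpha,\beta]\inv}$. Granting these last reductions, the image of each of the eleven Procesi--Teranishi generators lies in the algebra generated by $\Fm_{\alpha,\beta}$, and hence $\C[\XX]$ is generated by $\Fm_{\alpha,\beta}$. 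I expect the collapse of this degree‑six invariant — equivalently, the reduction of those two length‑four traces — to be the principal obstacle, every other step being a bounded and essentially routine computation.
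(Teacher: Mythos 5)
Note first that the paper does not prove Theorem~\ref{thm:top of SU(3) rep} at all: it is quoted from Lawton \cite{La06,La07}, so the only ``proof'' in the paper is the citation, and your attempt has to be measured against the content of that cited result. Your reduction framework is correct and is in the same circle of ideas as Lawton's actual argument: $\C[\XX]$ is the trace algebra of two $\SL(3,\C)$ matrices; the restriction $\C[M_3(\C)^2]^{\GL(3,\C)}\to\C[\SL(3,\C)^2]^{\GL(3,\C)}$ is surjective by reductivity; Procesi's first fundamental theorem and Teranishi's eleven generators apply; and Cayley--Hamilton on $\SL(3,\C)$ correctly converts $\Tr(X^2)$, $\Tr(X^3)$, $\Tr(X^2Y)$, $\Tr(XY^2)$, $\Tr(X^2Y^2)$ (and their analogues in $Y$) into polynomials in the eight trace functions of $\alpha^{\pm1},\beta^{\pm1},(\alpha\beta)^{\pm1},(\alpha\beta\inv)^{\pm1}$. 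Your bookkeeping in the expansion of $\Tr(X^2Y^2XY)$ is also accurate: the terms $\Tr((XY)^2)$, $\Tr(X^2Y)$, $\Tr(XY^2)$, $\Tr(X\inv Y\inv XY)=\f_{[\alpha,\beta]}$ all collapse, and the obstruction is exactly the pair $\Tr(XY\inv XY)$, $\Tr(X\inv YXY)$.

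The genuine gap is that you stop at precisely the step that constitutes the theorem. Expressing the degree-six Teranishi generator --- equivalently those two length-four traces --- as polynomials in $\Fm_{\alpha,\beta}$ is the entire nontrivial content of the statement; every step you do carry out is routine symmetric-function and Cayley--Hamilton manipulation. By writing ``carried out, with some machine assistance, in \cite{La06,La07}'' you have not given a proof but merely reproduced, after some preprocessing, the citation the paper already makes. To close the gap you must actually perform the reduction: for instance, fully polarize the Cayley--Hamilton identity for $3\times 3$ matrices in the triple $(X,Y\inv,X)$, multiply by $Y$ and take traces; this expresses $\Tr(XY\inv XY)$ (and, with $(X\inv,Y,X)$ against $Y$, the trace $\Tr(X\inv YXY)$) in terms of traces of words of length at most three in $X^{\pm1},Y^{\pm1}$, which then reduce as before, with $\f_{[\alpha,\beta]}$ and $\f_{[\alpha,\beta]\inv}$ entering through the known symmetric relation between them. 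Either write that computation out explicitly and check it lands in the subalgebra generated by $\Fm_{\alpha,\beta}$, or cite Lawton for the theorem as the paper does; as it stands the proposal is a correct reduction to the crux, not a proof of it.
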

Since $\Tr(A\inv) = \overline{\Tr(A)}$ for $A\in\SUthree$,
the set $\Fm_{\alpha,\beta}$ is invariant under complex conjugation.


The relative $\SUthree$-character variety $\M_c$ of $\Ftwo$ embeds in $\XX$ as a real semialgebraic
subset. 
The regular functions on this $\R$-semialgebraic set are the real and imaginary parts 
of the restrictions to $\M_c$ of the regular functions on $\XX$.
\begin{cor}\label{prop:IT g=1}
The set 
\[ 
\Fm_{\alpha,\beta}^\R := \{\f_\gamma^R, \f_\gamma^I : \gamma \in \Scal_{\alpha,\beta}\} \]
is infinitesimally transitive on $\M_c$.  
\end{cor}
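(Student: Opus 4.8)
The plan is to deduce Corollary~\ref{prop:IT g=1} from Theorem~\ref{thm:top of SU(3) rep} by transporting the statement ``$\C[\XX]$ is generated by $\Fm_{\alpha,\beta}$'' from the GIT quotient $\XX$ down to the smooth locus of the real semialgebraic set $\M_c \subset \XX$. First I would recall that, by construction, $\M_c$ sits inside $\XX$ as a real semialgebraic subset, and that on its smooth part (the part carrying the symplectic form $\Omega$, which is where infinitesimal transitivity needs to be checked) the regular functions are exactly the real and imaginary parts of restrictions of elements of $\C[\XX]$. Since by Theorem~\ref{thm:top of SU(3) rep} every element of $\C[\XX]$ is a polynomial in the functions $\f_\gamma$ for $\gamma \in \Gg \cup \Gg\inv$, every regular function on $\M_c$ is a polynomial (over $\R$) in the functions $\f_\gamma^R, \f_\gamma^I$ for $\gamma \in \Gg$ — note $\f_{\gamma\inv}^R = \f_\gamma^R$ and $\f_{\gamma\inv}^I = -\f_\gamma^I$ on $\SUthree$, so the $\Gg\inv$-part contributes nothing new, which is exactly the remark about $\Fm_{\alpha,\beta}$ being conjugation-invariant.

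Next I would make the elementary observation that if a collection $\{g_1,\dots,g_N\}$ of smooth functions on a manifold $M$ generates (as an $\R$-algebra, together with constants) a subring whose differentials span $T_x^*M$ at every $x$, then already the $g_i$ themselves have differentials spanning $T_x^*M$ at every $x$: indeed by the Leibniz/chain rule the differential of any polynomial $P(g_1,\dots,g_N)$ at $x$ is an $\R$-linear combination of the $dg_i(x)$, so the span of the $dg_i(x)$ is the same as the span of differentials of the whole subring. Therefore it suffices to know that the differentials of the coordinate functions on $\M_c$ — equivalently, the differentials of the real and imaginary parts of restrictions of regular functions on $\XX$ — span the cotangent space at each smooth point, which is simply the statement that these functions locally embed $\M_c$, i.e. that $\M_c$ is (classically) a smooth real-analytic submanifold on its smooth locus cut out with enough functions to separate tangent directions. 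This holds because the ambient $\XX$ is an affine variety whose coordinate ring separates points and tangent vectors on its smooth locus, and $\M_c$ is a locally closed real semialgebraic subvariety, so the restrictions still do so on the smooth part.

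Putting these together: at any $x$ in the smooth locus of $\M_c$, the differentials $df(x)$ for $f$ ranging over real and imaginary parts of restrictions of all of $\C[\XX]$ span $T_x^*\M_c$; by Theorem~\ref{thm:top of SU(3) rep} all such $f$ are real polynomials in the finitely many functions in $\Fm_{\alpha,\beta}^\R$; hence by the Leibniz-rule observation the differentials of the functions in $\Fm_{\alpha,\beta}^\R$ already span $T_x^*\M_c$. This is precisely the assertion that $\Fm_{\alpha,\beta}^\R$ is infinitesimally transitive on $\M_c$.

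The main obstacle — really the only subtle point — is the passage from ``generators of the coordinate ring'' to ``differentials span the cotangent space,'' which has to be done carefully at the level of the \emph{real} semialgebraic structure of $\M_c$ rather than the complex variety $\XX$: one must be sure that restricting to $\M_c$ and taking real/imaginary parts does not lose the cotangent-spanning property, i.e. that the real points under consideration are smooth points of $\M_c$ of the expected dimension and that the real coordinate ring is generated by these real/imaginary parts. Given the setup already recorded in the excerpt (the explicit description of the regular functions on the $\R$-semialgebraic set $\M_c$, and the identification of its smooth locus with the symplectic manifold $\MCCo$), this is a formal consequence, so I expect the proof to be short, with the bulk of the content already carried by Lawton's theorem.
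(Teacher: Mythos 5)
Your proposal is correct and follows essentially the same route as the paper: the paper's proof simply invokes the remarks on $\M_c\subset\XX$ and its real regular functions together with Theorem~\ref{thm:top of SU(3) rep} and \cite[Lemma~3.1]{GX11}, and that cited lemma is exactly the ``generators of the coordinate ring have differentials spanning the cotangent space at smooth points'' step that you spell out via the Leibniz-rule observation. The only difference is that you make explicit (rather than cite) this formal step, including the conjugation-invariance remark handling $\Gg\inv$, so there is no substantive divergence.
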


\begin{proof}
Given the remarks preceding this corollary, the result follows from Theorem \ref{thm:top of SU(3) rep} and  \cite[Lemma 3.1]{GX11}.
\end{proof}

\section{The Dehn twists}\label{ssec:Dehn}
Let  $\alpha \in \Scal$ and $\tau_\alpha\in \Gamma$ be the corresponding Dehn twist.
The fundamental group $\pi$ can be
reconstructed from the fundamental group $\pi_1(\Sigma | \alpha)$ as
an HNN-extension as in \eqref{eq:hnn}.
Then $\tau_\alpha$ induces the automorphism
$(\tau_\alpha)_*\in\Aut(\pi)$ defined by:

\begin{equation*}
(\tau_\alpha)_*:\gamma \longmapsto
\begin{cases}  \gamma
& \text{if}~ \gamma \in \pi_1(\Sigma|\alpha) \\
\gamma  \alpha
& \text{if}~ \gamma = \beta. \end{cases}
\end{equation*}
This further induces the map $(\tau_\alpha)^*$ on $\Hom(\pi,G)$ mapping
$\rho$ to:
\begin{equation}\label{eq:dehnnonsep}
(\rho)(\tau_\alpha)^*:\gamma \longmapsto
\begin{cases}  \rho(\gamma)
& \text{if}~ \gamma \in \pi_1(\Sigma|\alpha) \\
\rho(\gamma)  \rho(\alpha)
& \text{if}~ \gamma = \beta. \end{cases}
\end{equation}
(See \cite{Go86}).

\subsection{Dehn twists and Hamiltonian twist flows}\label{subsec:DH}

Let $a := \rho(\alpha)$ and $b = \rho(\beta).$
Then
\begin{align*}
R_c & = \{\rho \in \Hom(\pi, K) : \kappa(\rho(\alpha), \rho(\beta)) = c\} \\ & = \{(a,b) \in K \times K : \kappa(a,b) = c\}.
\end{align*}
Let
$$\H(a,b) := \{a\}\times bK_a, \ \ \ \ \H'(a,b) := aK_b \times \{b\}$$

\begin{prop}\label{prop:flow}
If $(a,b) \in R_c$, then
$\H(a,b), \H'(a,b) \subseteq R_c.$
\end{prop}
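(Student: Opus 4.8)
The plan is to verify directly that the two families $\H(a,b)$ and $\H'(a,b)$ consist of representations satisfying the boundary constraint $\kappa(\cdot,\cdot)=c$. Concretely, a point of $\H(a,b)$ has the form $(a, b\zeta)$ with $\zeta \in K_a$, and a point of $\H'(a,b)$ has the form $(a\eta, b)$ with $\eta \in K_b$; so it suffices to compute $\kappa(a,b\zeta)$ and $\kappa(a\eta,b)$ and check that both equal $\kappa(a,b) = c$.

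For $\H(a,b)$: since $\zeta$ centralizes $a$, we have $a\zeta = \zeta a$, hence
\[
\kappa(a,b\zeta) = a\,(b\zeta)\,a\inv\,(b\zeta)\inv = a b \zeta a\inv \zeta\inv b\inv = a b (\zeta a\inv \zeta\inv) b\inv = a b a\inv b\inv = \kappa(a,b) = c,
\]
using $\zeta a\inv \zeta\inv = a\inv$. For $\H'(a,b)$: since $\eta$ centralizes $b$, we have $b\eta = \eta b$ and $b\inv \eta = \eta b\inv$, hence
\[
\kappa(a\eta,b) = (a\eta)\,b\,(a\eta)\inv\,b\inv = a\eta b \eta\inv a\inv b\inv = a (\eta b \eta\inv) a\inv b\inv = a b a\inv b\inv = \kappa(a,b) = c.
\]
This gives $\H(a,b), \H'(a,b) \subseteq \kappa\inv(c) = R_c$, which is the claim. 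One may also note for later use (the twist-flow description) that $\H(a,b)$ is precisely the orbit of $(a,b)$ under the flow $\xi_\alpha^t$ of \eqref{eq:hamtwist1}, since $\zeta^t(a) \in K_a$, and likewise $\H'(a,b)$ is an orbit of the twist flow associated with $\beta$.

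There is essentially no obstacle here: the statement is a short algebraic identity, and the only thing to be careful about is the bookkeeping of which variable is being centralized and the observation that centralizing an element also centralizes its inverse (immediate from $\zeta a = a\zeta \implies a\inv \zeta = \zeta a\inv$). The mild subtlety worth a sentence in the writeup is that $K_a$ here denotes the centralizer in $K$ of $a$, consistent with the notation section, so that $\H(a,b) \subseteq K \times K$ and the first coordinate is genuinely unchanged; the content of the proposition is really just that the twist flows of Section~\ref{ssec:hamilton} preserve $R_c$, recovering the fact from \cite{Go86} that Hamiltonian flows of trace functions of simple closed curves stay inside the relative representation variety.
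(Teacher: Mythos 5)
Your proof is correct and follows essentially the same route as the paper: a direct computation of the commutator using the fact that a centralizing element (and hence its inverse) commutes past $a$ or $b$, the only difference being that you write out the $\H'(a,b)$ case explicitly where the paper declares it ``similar.'' Your bookkeeping ($\zeta \in K_a$ for $\H(a,b)$, $\eta \in K_b$ for $\H'(a,b)$) is in fact cleaner than the paper's, whose proof begins ``Suppose $t \in K_b$'' where $t \in K_a$ is clearly intended.
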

\begin{proof}
Suppose $t \in K_b$.  Then $at = ta$ and $t^{-1}a^{-1} = a^{-1}t^{-1}$.
Then
$$
k(a,bt) = a(bt)a^{-1}(bt)^{-1} = abta^{-1}t^{-1}b^{-1} = aba^{-1}b^{-1} = k(a,b) = c.
$$
Hence $(a,bt) \in R_c$.  The proof of $aK_b \times \{b\} \subseteq R_c$ is similar.
\end{proof}
\begin{prop}\label{prop: H-orbit}
If $(a,b) \in R_c$,
then the Hamiltonian flows of the vector fields $\Ham(\f_\alpha^R)$ and  $\Ham(\f_\alpha^I)$ preserve $[\H(a,b)]$.  Similarly, $\Ham(\f_\beta^R)$ and $\Ham(\f_\beta^I)$ preserve $[\H'(a,b)]$.
\end{prop}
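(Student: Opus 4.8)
The plan is to recognize the twist flow $\xi_\alpha^t$ from \eqref{eq:hamtwist1} as the explicit lift to $R_c$ of the Hamiltonian flow of $\Ham(\f_\alpha^R)$ (resp. $\Ham(\f_\alpha^I)$), and to observe that on $R_c \cong \{(a,b) : \kappa(a,b)=c\}$ this lift fixes $a = \rho(\alpha)$ and moves $b = \rho(\beta)$ only by right-multiplication by the one-parameter subgroup $\zeta^t(a) = \exp\big(t F(a)\big)$. Since $F(a)$ centralizes $a$ (as noted after \eqref{eq:oneparameter}), we have $\zeta^t(a) \in K_a$, so the flow sends $(a,b)$ to $(a, b\,\zeta^t(a)) \in \{a\}\times bK_a = \H(a,b)$; thus the lifted flow preserves $\H(a,b)$, and passing to $\Inn(K)$-orbits, the Hamiltonian flow on $\M_c$ preserves $[\H(a,b)]$. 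For the claim about $\Ham(\f_\beta^R)$ and $\Ham(\f_\beta^I)$, I would apply the identical argument with the roles of $\alpha$ and $\beta$ interchanged: the twist flow associated with $\f_\beta$ fixes $b$ and multiplies $a$ on the appropriate side by an element of $K_b$, landing in $aK_b \times \{b\} = \H'(a,b)$.

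The steps, in order: (1) recall from Section~\ref{ssec:hamilton} that for $\f = \f_\alpha^R$ or $\f_\alpha^I$, the twist flow $\xi_\alpha^t$ of \eqref{eq:hamtwist1} covers the flow of $\Ham(\f)$ on $\M_c$; (2) specialize the HNN description \eqref{eq:hnn} to our case $\Sigma = \Sigma_{1,1}$, where splitting along the nonseparating simple closed curve $\alpha$ yields $\pi_1(\Sigma|\alpha) = \langle \alpha_- \rangle = \langle \alpha_+ \rangle$ (a rank-one free group, with $\alpha_\pm$ both mapping to $\alpha \in \pi$) and the stable letter $\beta$ satisfying $\beta \alpha_- \beta^{-1} = \alpha_+$ — so that $(\rho',\beta')$ amounts to the pair $(a,b)$ with no constraint beyond $\kappa(a,b)=c$; (3) read off from \eqref{eq:hamtwist1} that $\xi_\alpha^t(\rho)$ sends $\alpha \mapsto a$ and $\beta \mapsto b\,\zeta^t(a)$; (4) invoke that $\zeta^t(a) \in K_a$ to conclude $\xi_\alpha^t(\rho) \in \H(a,b)$, hence (using $\H(a,b)\subseteq R_c$ from Proposition~\ref{prop:flow}) that the orbit stays in $\H(a,b)$; (5) descend to $\M_c$ by quotienting by $K_c$-conjugation, noting that conjugation carries $\H(a,b)$ to $\H(kak^{-1}, kbk^{-1})$ so the collection $\{[\H(a,b)]\}$ is permuted compatibly and each individual $[\H(a,b)]$ is preserved by the flow; (6) repeat (1)--(5) with $\alpha$ and $\beta$ swapped for the $\f_\beta$ statements.

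I expect the only real subtlety — not an obstacle so much as a point requiring care — to be step (2): verifying that the generic/geometric setup of Section~\ref{ssec:hamilton} applied to $\Sigma_{1,1}$ genuinely identifies the data $(\rho',\beta')$ with the coordinates $(a,b)$ used in Section~\ref{subsec:DH}, including matching $\rho'(\alpha_-)$ with $a = \rho(\alpha)$ and checking that the ``if $\gamma \in \pi_1(\Sigma|\alpha)$'' clause of \eqref{eq:hamtwist1} is exactly the statement that $\rho(\alpha)$ is unchanged. Once that bookkeeping is pinned down, everything else is immediate from the fact that $F(a)$, and hence $\zeta^t(a)$, centralizes $a$. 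Since the statement and its mirror are symmetric under interchanging the two free generators, the second half of the proposition needs no separate argument beyond this observation.
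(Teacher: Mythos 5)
Your argument is correct and is essentially the paper's own proof, which simply cites the twist-flow formula \eqref{eq:hamtwist1} and the symmetry in $\alpha$ and $\beta$: the flow fixes $a=\rho(\alpha)$ and moves $b=\rho(\beta)$ by right-multiplication by $\zeta^t(a)\in K_a$, hence stays in $\H(a,b)$ and descends to preserve $[\H(a,b)]$. Your extra bookkeeping in step (2) about the HNN splitting of $\Sigma_{1,1}$ is a fair point of care but does not change the substance.
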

\begin{proof}
This follows from \eqref{eq:hamtwist1} and by exchanging $\alpha$ and $\beta$.
\end{proof}


\begin{cor}\label{cor:generic1}
If $a$ is generic, then $\la \tau_\alpha\ra$ acts ergodically on $\H(a,b)$.
If $b$ is generic, then $\la \tau_\beta\ra$ acts ergodically on $\H'(a,b)$.
\end{cor}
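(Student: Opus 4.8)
The plan is to reduce the statement to Proposition~\ref{prop:irrational rotation} by identifying the Dehn-twist action on $\H(a,b)$ with the rotation action of the cyclic group $\la a\ra$ on the left coset $bK_a$. First I would recall from \eqref{eq:dehnnonsep} that $(\tau_\alpha)^*$ sends $(a,b)$ to $(a, ba)$, and more generally $(\tau_\alpha^n)^*$ sends $(a,b)$ to $(a, ba^n)$. Since $a$ is regular, $K_a$ is a maximal torus containing $a$, so $ba^n \in bK_a$ for all $n$, and hence the $\la\tau_\alpha\ra$-action genuinely preserves $\H(a,b) = \{a\}\times bK_a$ (consistent with Proposition~\ref{prop:flow} and Proposition~\ref{prop: H-orbit}). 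Under the obvious identification $\H(a,b)\cong bK_a$, the generator $\tau_\alpha$ acts exactly as $b\zeta\mapsto b\zeta a$, which is the right-multiplication-by-$a$ action described just before Proposition~\ref{prop:irrational rotation}.

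Next I would pin down the measure. The symplectic measure on $\M_c$ restricts, on the slices $[\H(a,b)]$, to the natural measure coming from the identification with $bK_a/(\text{stabilizer})$; on the representation-variety level the relevant measure on $bK_a$ is the pushforward $\mu_{ba} = (L_b)_*\mu_{K_a}$ of Haar measure on $K_a$, which is precisely the measure appearing in the fourth bullet of Proposition~\ref{prop:irrational rotation}. This is the point that needs a small amount of care: one should check that the flat/Haar measure on the torus fiber is the one induced by $\Omega$, but this is standard for the twist flows of Goldman~\cite{Go86} (the twist flow is the Hamiltonian flow of $\f_\alpha$, and it acts on $bK_a$ by the same right translations, so the invariant measure classes agree). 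Granting this, ergodicity of $\la\tau_\alpha\ra$ on $(\H(a,b),\mu)$ is equivalent to ergodicity of the $\la a\ra$-action on $(bK_a,\mu_{ba})$.

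Finally, by hypothesis $a$ is generic, which by definition (the statement following Proposition~\ref{prop:irrational rotation}) means exactly that the equivalent conditions of that proposition hold; in particular the fourth bullet gives that the action of $\la a\ra$ on $(bK_a,\mu_{ba})$ is ergodic. Transporting this back through the identification $\H(a,b)\cong bK_a$ yields that $\la\tau_\alpha\ra$ acts ergodically on $\H(a,b)$. The statement for $\tau_\beta$ acting on $\H'(a,b) = aK_b\times\{b\}$ follows by the symmetry $\alpha\leftrightarrow\beta$, $a\leftrightarrow b$ already invoked in Proposition~\ref{prop: H-orbit}, using $(\tau_\beta^n)^*(a,b) = (ab^n, b)$.

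The main obstacle I anticipate is not the dynamics — that is immediate from Proposition~\ref{prop:irrational rotation} — but the bookkeeping that the measure $\mu$ induced on the slice $\H(a,b)$ (equivalently on $[\H(a,b)]$, after quotienting by the residual $K_c$-action) is in the same measure class as the Haar-type measure $\mu_{ba}$ on $bK_a$; one wants this for \emph{almost every} $(a,b)$, which is why the corollary is phrased with the genericity hypothesis rather than asserted for all slices. This is handled exactly as in \cite{GX11}: the twist flow of $\f_\alpha$ is volume-preserving for $\Omega$ and acts on the torus fiber by translations, which forces the induced measure to be Haar.
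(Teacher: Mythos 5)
Your argument is correct and is essentially the paper's own proof: the paper likewise observes via \eqref{eq:dehnnonsep} that $\tau_\alpha$ sends $(a,b)$ to $(a,ba)\in\H(a,b)$, so the $\la\tau_\alpha\ra$-action is right-multiplication by $a$ on $bK_a$, and then cites Proposition~\ref{prop:irrational rotation} (with the symmetric argument for $\tau_\beta$ on $\H'(a,b)$). Your extra bookkeeping about comparing the symplectic measure with $\mu_{ba}$ is not needed here, since the ergodicity asserted in the corollary is with respect to the measure $\mu_{ba}$ of Proposition~\ref{prop:irrational rotation}; the passage to the symplectic measure on $\M_c$ is handled later, in Corollary~\ref{cor:GH} and the ergodicity theorem.
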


\begin{proof}
By \eqref{eq:dehnnonsep},
$\tau_\alpha(a,b) \in \H(a,b)$ and $\tau_\beta(a,b) \in \H'(a,b)$
The corollary then follows from Proposition \ref{prop:irrational rotation}.
\end{proof}


\section{The case of $K = \SU(3)$}
For the rest of this paper, we denote $\omega := e^{2\pi i/3}$ and the identity transformation by $\Id$.
In this section we fix $K = \SU(3)$.  

The classification of conjugacy classes of $K$ can be described in terms of the {\em trace\/} function
\[
K \xrightarrow{~\Tr~} \C. \]
Let $\Delta = \Tr(K)$.

If $\zeta_1,\zeta_2,\zeta_3\in\C$ are the eigenvalues of $a\in K$,
then they satisfy
\begin{equation}\label{eq:Eigenvalues}
\vert \zeta_1\vert = \vert \zeta_2\vert = \vert \zeta_3\vert = 1 \text{~and~} \zeta_1 \zeta_2 \zeta_3 = 1. \end{equation}
 The coefficients of the character polynomial $\chi_a$ are:
\begin{align*}
1 & = 1\\
\zeta_1 + \zeta_2 + \zeta_3 &  = \Tr(a) \\
\zeta_2\zeta_3 + \zeta_3\zeta_1 + \zeta_1\zeta_2 & = \overline{\Tr(a)} \\
\zeta_1 \zeta_2\zeta_3 & = 1.
\end{align*}
Therefore the  characteristic polynomial is:
\[
\chi_A(\lambda) \  = \
\lambda^3 \  - \ z\, \lambda^2 \ + \ \bar{z} \,\lambda \  - 1\]
where $z = \Tr(a)\in\C$.
Furthermore \eqref{eq:Eigenvalues} is equivalent to the condition:
\[
\vert z \vert^4 -  8\mathsf{Re} (z^3)  + 18 \vert z\vert^2  - 27 \le 0 \]
and this real polynomial condition exactly describes the image $\Delta \subseteq \C$.


\begin{figure}\label{fig:Delta}
 \includegraphics[scale=0.5]{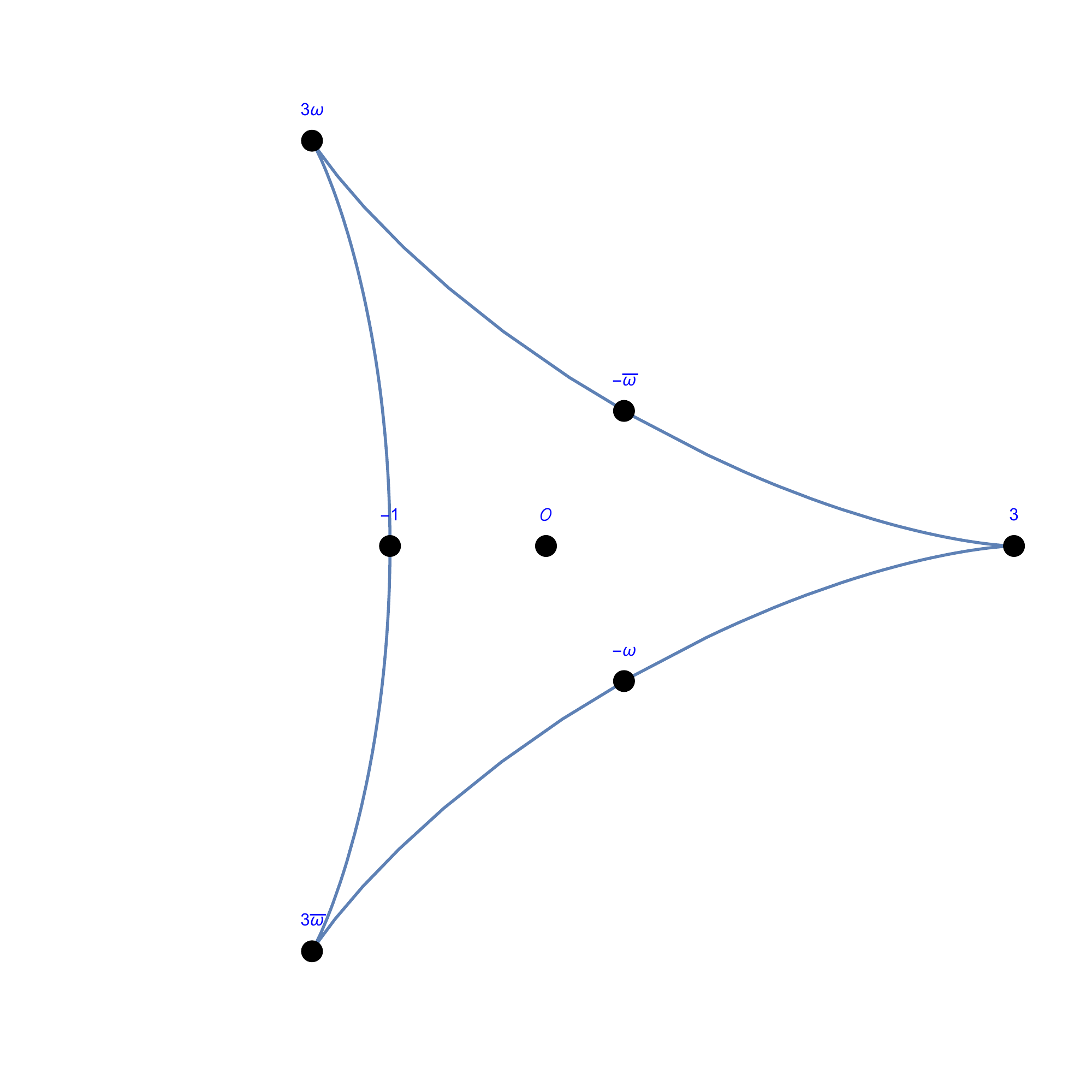}
 \caption{$\Delta$ the traces in $\SU(3)$.}
 \end{figure}

The traces of central elements are the vertices $3, 3\omega, 3\bar{\omega}$ of
$\Delta$.
The trace of a regular element of order three is the center $0$ of $\Delta$.
The traces of elements of order two are $-1, -\omega, -\bar{\omega}$,
the midpoints of the edges of $\partial \Delta$.

\begin{prop}\label{prop:Sard}
The map $\Tr$ is a local submersion at almost all points of $\Delta$.
\end{prop}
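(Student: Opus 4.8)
The plan is to exhibit $\Tr\colon K \to \C$ as a composite whose derivative can be computed explicitly, then show the set of critical points has measure zero in $\Delta$. First I would reduce to the maximal torus: since $\Tr$ is $\Inn(K)$-invariant and every element of $K = \SU(3)$ is conjugate to a diagonal matrix $\operatorname{diag}(\zeta_1,\zeta_2,\zeta_3)$ with $\zeta_1\zeta_2\zeta_3 = 1$, the image $\Delta$ equals the image of the map $T \xrightarrow{\Tr} \C$ on the maximal torus $T \cong \{(\zeta_1,\zeta_2,\zeta_3) : |\zeta_j|=1,\ \prod\zeta_j = 1\}$, a $2$-torus. Parametrizing $\zeta_j = e^{i\theta_j}$ with $\theta_1+\theta_2+\theta_3 \equiv 0$, the trace map becomes $(\theta_1,\theta_2) \mapsto e^{i\theta_1} + e^{i\theta_2} + e^{-i\theta_1-i\theta_2}$, and I would compute the Jacobian of this smooth map $\mathbb{T}^2 \to \R^2$ directly.

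The key step is then a Sard-type argument: the critical locus of $\Tr|_T$ is the vanishing locus of the (real-analytic, not identically zero) Jacobian determinant on the $2$-torus, hence a proper real-analytic subvariety of $\mathbb{T}^2$, so it has Lebesgue measure zero; its image under the smooth map $\Tr|_T$ therefore also has measure zero in $\C$. To conclude that $\Tr$ is a submersion at almost every point \emph{of $\Delta$}, I would observe that at a point $a \in T$ that is regular in $K$ (i.e. $\zeta_1,\zeta_2,\zeta_3$ distinct), a neighborhood of $[a]$ in the space of conjugacy classes is identified with a neighborhood of $a$ in $T/W$ (Weyl group), so $\Tr$ is a submersion of $K$ at $a$ precisely when $\Tr|_T$ is a submersion at $a$; and the non-regular elements of $T$ form a finite union of circles, a measure-zero subset of $T$ whose image is the boundary segments of $\partial\Delta$ described just above, again measure zero in $\C$. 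Finally I would push this back up to $K$: the quotient map $K \to K/\Inn(K)$ restricted to the regular set is a submersion onto its image, so the preimage of a measure-zero subset of $\Delta$ is measure-zero in $K$, and the critical points of $\Tr$ on $K$ coincide with the $\Inn(K)$-saturation of the critical points of $\Tr|_T$ together with the non-regular locus.

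Concretely, I expect the Jacobian computation to show that $\Tr|_T$ fails to be a submersion exactly along the ``discriminant'' locus where two of the $\zeta_j$ coincide — that is, precisely on the non-regular elements — so the two measure-zero sets in the argument above are really the same set, and its image is $\partial\Delta$ together with the three ``cusp-like'' interior arcs visible in Figure~\ref{fig:Delta}. This would give the cleaner statement that $\Tr$ is a submersion at \emph{every regular point}, and since the non-regular points map into a measure-zero subset of $\Delta$, the proposition follows. The main obstacle is the bookkeeping in the last paragraph: carefully relating submersivity of $\Tr$ as a map on $K$ (a $8$-manifold) to submersivity of $\Tr|_T$ (a map on a $2$-manifold), using that at a regular element the normal directions to the conjugacy class are exactly the tangent directions to $T$ and $\Tr$ is constant along conjugacy classes. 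I would handle this by writing $T_a K = T_a(\mathrm{Ad}(K)a) \oplus T_a T$ (orthogonal decomposition at a regular $a$) and noting $d\Tr_a$ kills the first summand, so $d\Tr_a$ is onto iff $d(\Tr|_T)_a$ is onto.
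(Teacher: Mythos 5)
Your argument is correct, but it takes a genuinely different route from the paper. The paper's proof is two lines: $\Tr\colon K\to\C$ is smooth, so by Sard's theorem almost every point of $\C$ (hence of $\Delta$, which has nonempty interior) is a regular value, which is exactly the statement. You instead reduce to the maximal torus and compute: writing $f(\theta_1,\theta_2)=e^{i\theta_1}+e^{i\theta_2}+e^{-i(\theta_1+\theta_2)}$ one gets $\partial f/\partial\theta_1=i(\zeta_1-\zeta_3)$, $\partial f/\partial\theta_2=i(\zeta_2-\zeta_3)$, and the real Jacobian $\Im\big[(\bar\zeta_1-\bar\zeta_3)(\zeta_2-\zeta_3)\big]$ vanishes precisely when two eigenvalues coincide (three distinct points of the unit circle are never collinear), so your expectation is right: the critical locus of $\Tr|_T$ is exactly the non-regular locus, and your splitting $T_aK=T_a(\Ad(K)a)\oplus T_aT$ at regular $a$, with $d\Tr_a$ killing the orbit directions, correctly transfers submersivity from $T$ to $K$. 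The image of the non-regular circles is $1$-dimensional, hence Lebesgue-null in $\C$ (smooth maps are locally Lipschitz), which finishes the proof; one small correction is that this image is just the deltoid $\partial\Delta$ --- there are no additional interior arcs, though this does not affect the measure-zero conclusion. What your approach buys is precisely the sharper statement the paper mentions in the remark following the proposition but declines to prove: $\Tr$ has full rank at every regular element, so it is a submersion over all of the interior of $\Delta$, with critical values exactly $\partial\Delta$. What it costs is the torus/orbit bookkeeping and the Jacobian computation, none of which is needed for the proposition as stated, which follows from the soft Sard argument alone.
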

\begin{proof}
The map $\Tr$ is smooth.  Hence, by Sard's theorem, almost all points of $\C$ are regular values of $\Tr$ \cite[\S 1.7]{GP10}.  Hence $\Tr$ is a local submersion at almost all points of $\Delta$.
\end{proof}
\begin{rem}
It is not difficult to show that $\Tr$ has full rank in the interior of $\Delta$, but we only need Proposition \ref{prop:Sard}.  For a general discussion of $\Delta$ and Weyl chambers, see \cite{DK00}.
\end{rem}

\begin{prop} \label{cor:GenericityConullInDelta}
The image $\Tr(N)$ of the subset $N \subseteq K$ of generic elements
is conull in  $\Delta$.
\end{prop}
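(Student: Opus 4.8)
The plan is to show that the non-generic elements form a set whose trace-image is null, and hence the generic elements have conull trace-image. First I would recall that by Proposition~\ref{prop:irrational rotation}, an element $a\in K$ fails to be generic precisely when either $a$ is not regular, or $a$ is regular but $\la a\ra$ is not dense (equivalently, not Zariski dense) in the maximal torus $K_a$. So $N = K \setminus (N_1 \cup N_2)$ where $N_1$ is the set of non-regular elements and $N_2$ is the set of regular elements generating a non-dense cyclic subgroup of their centralizer torus.

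The set $N_1$ of non-regular (i.e.\ singular) elements is a proper Zariski-closed subset of $K$ (cut out by the vanishing of the appropriate Weyl discriminant), hence a null set in $K$; moreover $\Tr$ is a smooth (indeed algebraic) map, so $\Tr(N_1)$ is contained in a proper real-algebraic subset of $\Delta$, which is null. For $N_2$: a regular element $a$ lies in a unique maximal torus $T = K_a \cong (S^1)^2$, and $\la a\ra$ is dense in $T$ iff the coordinates of $a$ in $T$ are rationally independent together with $1$. For a fixed torus $T$, the set of $a\in T$ for which $\la a\ra$ is \emph{not} dense is a countable union of proper closed subgroups (sub-tori), hence null in $T$; by Fubini applied to the fibration of the regular set by maximal tori (or equivalently by integrating over $K/N(T)\times T$ via the Weyl integration formula), the set $N_2$ is null in $K$. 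Then $K\setminus N = N_1\cup N_2$ is null in $K$.

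Now I would push this forward through $\Tr$. By Proposition~\ref{prop:Sard}, $\Tr$ is a local submersion at almost every point of $\Delta$; at such points $\Tr$ is an open map locally, and more importantly a local submersion sends conull sets to conull sets and null sets to null sets (a submersion locally looks like a projection, which preserves the Lebesgue measure class in both directions, using Fubini on the fibers). Let $\Delta_0\subseteq\Delta$ be the conull subset of points where $\Tr$ is a local submersion, and let $K_0 = \Tr^{-1}(\Delta_0)$, which is conull in $K$ since its complement $\Tr^{-1}(\Delta\setminus\Delta_0)$ is null (the fibers of $\Tr$ being lower-dimensional, or directly because the singular values form a null set and $\Tr$ is smooth). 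Then $N\cap K_0$ is conull in $K$, and restricting $\Tr$ to the open set where it is a submersion, the image $\Tr(N\cap K_0)$ is conull in $\Delta_0$, hence conull in $\Delta$. Since $\Tr(N)\supseteq\Tr(N\cap K_0)$, we conclude $\Tr(N)$ is conull in $\Delta$.

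The main obstacle I anticipate is the bookkeeping around measure-class preservation for $\Tr$: one must be careful that $\Tr$ is not a submersion everywhere (only almost everywhere, by Proposition~\ref{prop:Sard}), so the naive statement ``submersion implies image of conull is conull'' must be localized to the open set of submersion points and combined with the fact that the bad locus upstairs ($\Tr^{-1}$ of the non-submersion values) is itself null. A clean way to organize this is: (i) $N^c$ is null in $K$ (the torus/Fubini argument above), (ii) hence $\Tr(N)$ and $\Tr(N^c)$ together cover $\Delta$, and (iii) show $\Tr(N^c)$ is null --- which follows because on the conull open submersion locus, $\Tr$ maps null sets to null sets, and the remaining part of $N^c$ maps into the (null) set of critical values. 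Alternatively, one can invoke the Weyl integration formula directly to write the pushforward of Haar measure under $\Tr$ and check absolute continuity on the regular locus, but the submersion argument using the already-proved Proposition~\ref{prop:Sard} is the most economical route given what the paper has set up.
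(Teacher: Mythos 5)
Your overall route (show the non-generic elements are null in $K$, then push forward through $\Tr$) is genuinely different from the paper's, but as written it rests on a measure-theoretic principle that is false. A submersion does \emph{not} map null sets to null sets: here $\Tr$ goes from the $8$-dimensional group $K=\SU(3)$ to the $2$-dimensional region $\Delta\subseteq\C$, so locally it is a projection $\R^{6}\times\R^{2}\to\R^{2}$, and such a projection sends the null slice $\{\mathrm{pt}\}\times\R^{2}$ onto a set of full local measure. Hence the justification of your step (iii) (``on the conull open submersion locus, $\Tr$ maps null sets to null sets'') does not establish that $\Tr(K\setminus N)$ is null, and that organization of the proof collapses. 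The same problem infects the side claim that $K_0=\Tr^{-1}(\Delta_0)$ is conull ``because the singular values form a null set and $\Tr$ is smooth'': preimages of null sets under a smooth map with positive-dimensional fibers can have positive, even full, measure, and making that step honest would require knowing that the critical \emph{points} of $\Tr$ form a null subset of $K$ (e.g.\ that $\Tr$ is submersive at all regular elements) --- precisely the fact the paper remarks on but deliberately does not prove, and which Proposition~\ref{prop:Sard} does not give you.

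What does survive is the one direction of your dictionary that is actually true: if $A\subseteq K$ is conull and $U\subseteq K$ is an open set on which $\Tr$ is a submersion, then $\Tr(A\cap U)$ is conull in $\Tr(U)$ (cover $U$ by countably many product charts and apply Fubini in each chart: for almost every value the whole fiber cannot lie in the null set $K\setminus A$). Combining this with your (correct) argument that the non-generic elements are null in $K$ --- proper algebraic locus of non-regular elements, plus the countable union of character kernels inside each maximal torus, transported by the equal-dimensional map $K/T\times T\to K$ --- and with Proposition~\ref{prop:Sard} (regular values are conull in $\Delta$, and each has a nonempty fiber of submersion points) does give the proposition; note that run this way you never need $K_0$ to be conull in $K$, only $K\setminus N$ to be null, so the problematic step can simply be deleted rather than repaired. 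By contrast, the paper avoids all measure theory upstairs: it parametrizes eigenvalue triples by $(\alpha,\beta)\in\R^{2}$, takes the conull set of suitably irrational pairs, and notes that its image $\Delta'\subseteq\Delta$ is conull (its complement lies in the image of a null subset of $\R^{2}$ under a smooth map between spaces of equal dimension, where ``null maps to null'' \emph{is} valid), while $\Tr^{-1}(\Delta')\subseteq N$ because the trace determines the eigenvalues and hence genericity. That argument is two lines, needs no Fubini over the $8$-dimensional group, and does not even invoke Proposition~\ref{prop:Sard}.
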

\begin{proof}
Let
$$
U = \{(\alpha, \beta) \in \R \times \R : \alpha, \beta, \frac{\alpha}{\beta} \in \R \setminus \Q\}
$$
Let $\Delta'\subset\Delta$ be the image of $U$ under the mapping
\begin{align*}
\R \times \R &\longrightarrow \C \\
(\alpha, \beta) &\longmapsto e^{2\pi i\alpha} + e^{2\pi i\beta} + e^{-2\pi i(\alpha +\beta)}
\end{align*}
Then $\Delta'$ is conull in $\Delta$ and $\Tr^{-1}(\Delta') \subseteq N$.
\end{proof}


\section{Central fibers of $\kappa$}
Again in this section, we fix $K = \SU(3)$.
\subsection{The abelian representations}\label{sec:ab}
The fiber $R_\I = \kappa\inv(\I)$ consists of commuting pairs $(a,b)$.
In this case, $a$ and $b$ lie in a maximal torus $\bT^2$.  Hence $\M_\I \cong (\bT^2 \times \bT^2)/W$, where $W$ is the Weyl group of $K$ acting diagonally (see \cite{FL14} for more discussion of these abelian character varieties)

\subsection{The non-abelian cases}\label{sec:nab}
\begin{prop}\label{prop:single point}
If $k = \omega\I$ where $\omega\neq 1$, then $\M_k$ consists of a single point.
Specifically, if $(a,b)\in R_k$, then there exists $g\in K$ such that
\[
a = g a_0 g\inv , \qquad b = g b_0 g\inv \]
where
\begin{equation}\label{eq:A0B0}
a_0 := \bmatrix 1 & 0 & 0 \\ 0 & \omega & 0 \\ 0 & 0 & \omega^2 \endbmatrix, \qquad
b_0 := \bmatrix 0 & 0 & 1 \\ 1 & 0 & 0 \\ 0 & 1 & 0 \endbmatrix. \end{equation}
\end{prop}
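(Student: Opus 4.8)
The plan is to solve the commutator equation $\kappa(a,b) = \omega\I$ inside $\SU(3)$ directly, exploiting the fact that $\omega\I$ is central. Write the equation as $aba\inv = \omega\, b$. This says that conjugation by $a$ sends $b$ to $\omega b$, so the eigenvalues of $b$ must be permuted by multiplication by $\omega$; since $\det b = 1$ and multiplication by $\omega$ has no fixed points on the three eigenvalues (they would all be equal, forcing $b = \omega^k\I$, which cannot satisfy $\det b = 1$ unless $b = \I$, but then $aba\inv = \omega\I \ne \I$ is impossible), the eigenvalues of $b$ form a single $3$-cycle $\{\mu, \omega\mu, \omega^2\mu\}$ with $\mu^3\omega^3 = \mu^3 = 1$. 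After scaling we may take the eigenvalues of $b$ to be $\{1,\omega,\omega^2\}$, exactly as for $b_0$. By the same argument applied to $aba\inv = \omega b \Leftrightarrow b\inv a b = \omega\inv a$ (or symmetrically), $a$ also has eigenvalues $\{1,\omega,\omega^2\}$, matching $a_0$.

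Next I would put things in normal form. Conjugating by a suitable $g \in K$, we may assume $a = a_0 = \mathrm{diag}(1,\omega,\omega^2)$. The relation $aba\inv = \omega b$ then becomes a linear condition on the entries $b_{ij}$: comparing $(a_0)_{ii} b_{ij} (a_0)_{jj}\inv = \omega\, b_{ij}$ forces $b_{ij} = 0$ unless $\omega^{i-1}\omega^{-(j-1)} = \omega$, i.e. $i \equiv j+1 \pmod 3$. Hence $b$ is supported on the cyclic subdiagonal: $b_{21}, b_{32}, b_{13}$ are the only possibly-nonzero entries, so $b = \mathrm{diag}(b_{13},b_{21},b_{32})\cdot b_0$ in the obvious sense — a "monomial" matrix of cyclic type. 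Unitarity forces each of $b_{21}, b_{32}, b_{13}$ to have modulus $1$, and $\det b = 1$ (noting the cyclic permutation matrix $b_0$ has determinant $1$) forces their product to be $1$. Thus $b = d\, b_0\, d\inv$ for a diagonal unitary $d$ with $\det d$ arbitrary; conjugating by $d\inv$ (which commutes with $a_0$ since $d$ is diagonal, hence preserves $a = a_0$) brings $b$ to $b_0$ exactly. One checks directly that $\kappa(a_0,b_0) = a_0 b_0 a_0\inv b_0\inv = \omega\I$, confirming $(a_0,b_0) \in R_k$ and that we have reached the stated normal form with the composite conjugator $g$.

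Finally, since every $(a,b) \in R_k$ is $K$-conjugate to $(a_0,b_0)$, the quotient $\M_k = R_k/K_k$ is a single point. The main obstacle is the bookkeeping in the normalization step: one must be careful that the residual freedom after fixing $a = a_0$ is exactly conjugation by the diagonal torus (the centralizer $K_{a_0}$, since $a_0$ is regular), and then verify that this torus acts transitively on the set of admissible $b$'s — equivalently, that any two cyclic monomial unitary matrices with determinant $1$ and the correct off-diagonal pattern are conjugate by a diagonal unitary. This is a short computation with the three phases $b_{21}, b_{32}, b_{13}$ on the circle subject to one multiplicative constraint, matched against a two-torus of diagonal conjugations modulo the center — a dimension count ($3 - 1 = 2$) that works out cleanly. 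I would also remark at the end that this recovers the classical fact that the Heisenberg-type pair $(a_0,b_0)$ is the unique (up to conjugacy) irreducible projective representation of $\Z^2$ of level $\omega$, which is the conceptual reason the fiber is a point.
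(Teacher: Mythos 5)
Your proposal is correct, and it reaches the normal form by a slightly different route than the paper. The paper first proves a lemma that $a^3=b^3=\Id$: taking traces of $aba\inv b\inv=\omega\I$ gives $\Tr(a)=\omega\Tr(a)$ and $\Tr(a\inv)=\omega\Tr(a\inv)$, hence both traces vanish, and Cayley--Hamilton then forces the characteristic polynomial $\lambda^3-1$, so $a$ (and likewise $b$) has eigenvalues $1,\omega,\omega^2$; you instead read the relation as $aba\inv=\omega b$, so the spectrum of $b$ is invariant under multiplication by $\omega$, and since this map has no nonzero fixed point the eigenvalues form a single orbit $\{\mu,\omega\mu,\omega^2\mu\}$ with $\mu^3=\det b=1$. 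Both arguments are short; yours avoids Cayley--Hamilton, the paper's avoids the multiset bookkeeping. After diagonalizing $a=a_0$, the paper argues that $b$ sends $\Eig_\lambda(a)$ to $\Eig_{\omega\lambda}(a)$ and then simply asserts $b=b_0$; strictly this only shows $b$ is a cyclic monomial matrix with unimodular entries of product one, and your final step --- conjugating by a diagonal element of $K_{a_0}$, using that the torus acts transitively on such matrices because the single constraint $b_{13}b_{21}b_{32}=1$ is exactly the image condition --- supplies the normalization the paper elides, so your write-up is actually more complete on this point. Two harmless slips to fix: the parenthetical claim that $b=\omega^k\I$ ``cannot satisfy $\det b=1$ unless $b=\I$'' is false ($\det(\omega\I)=\omega^3=1$); the correct and simpler reason no eigenvalue is fixed is that $\omega\mu=\mu$ forces $\mu=0$, impossible for a unitary matrix. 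Also $aba\inv=\omega b$ is equivalent to $b\inv ab=\omega a$ (not $\omega\inv a$), and no ``scaling'' is available or needed since $\mu^3=1$ already pins the eigenvalue set to $\{1,\omega,\omega^2\}$. Neither affects the argument.
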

\begin{proof}
Suppose $(a,b)\in R_k$, that is,
\begin{equation} \label{eq:CommOmega}
a b a\inv b\inv = \omega\I.  \end{equation}
We first prove:
\begin{lem}\label{lem:OrderThree}
$a^3 = b^3 = \Id$ \end{lem}
\begin{proof}[Proof of Lemma~\ref{lem:OrderThree}]
By \eqref{eq:CommOmega} and taking traces,
\begin{equation*}
\Tr(a)  = \Tr(\omega b a b\inv) = \omega \Tr(a)    \ \text{ and } \
\Tr(a\inv)  = \Tr(\omega b\inv a\inv b) = \omega \Tr(a^{-1}).
\end{equation*}
Hence $\omega\neq 1$ implies that $\Tr(a) = \Tr(a\inv) = 0$.
Now apply the Cayley-Hamilton theorem:
\[ a^3 -  \Id = a^3 - \Tr(a) a^2 + \Tr(a\inv) a - \Det(a) \Id = 0. \]
The same argument applied to
$b   = \omega a\inv b a$ implies $b^3=\Id$, as claimed. \end{proof}
Returning to the proof of Proposition~\ref{prop:single point},
Lemma~\ref{lem:OrderThree} implies that
$a = \Inn(g) a_0$ for some $g\in K$, since $a\neq \I$.

We claim that $b = \Inn(g) b_0$.
For convenience assume that $a = a_0$;
then we show that \eqref{eq:CommOmega} implies that
$b$ is the permutation matrix $b_0$ defined in \eqref{eq:A0B0}.


Recall that $\Eig_\lambda(a)$ is the $\lambda$-eigenspace of $a$.
We claim that \eqref{eq:CommOmega} implies that
\begin{equation}\label{eq:PermuteEig}
b \big( \Eig_\lambda (a) \big) \ = \ \Eig_{\omega\lambda} (a). \end{equation}
To see this, rewrite \eqref{eq:CommOmega} as:
\begin{equation}\label{eq:ABBA}
a b \ = \ \omega ba  \end{equation}
Suppose that $v\in  \Eig_\lambda (a)$, that is,
\[
a v = \lambda v \]
so applying \eqref{eq:ABBA},
\[
a (b v) \ = \ \omega b a v \ = \ \omega\lambda (b v) \]
whence $b v \in \Eig_{\omega\lambda} (a)$, as claimed.

Since $a$ is the diagonal matrix $a_0$ defined by \eqref{eq:A0B0},
the lines
$\Eig_1(a), \Eig_\omega(a),$ and $\Eig_{\bar\omega}(a)$ are the three coordinate lines in $\C^3$.
Thus \eqref{eq:PermuteEig} implies that $b = b_0$,
concluding the proof of Proposition~\ref{prop:single point}.
\end{proof}

In particular, both $a$ and $b$ have order $3$.
Since $\kappa(a,b)$ has order 3 and is central, 
the subgroup $\langle a, b\rangle \subset K$ is a nonabelian group of order $27$,
a nontrivial central extension of $\Z/3 \oplus \Z/3$ by $\Z/3$.

\section{Ergodicity}
Let $K$ be any compact Lie group.
Each tangent space $T_a K$ identifies with the Lie algebra $\fk$ of {\em right-invariant vector fields:\/}
namely, a tangent vector $v\in T_a K$ identifies with the right-invariant vector $X \in \fk$ such that $X(a) = v$.  In this way, the differential of the commutator map $\kappa$ at
$(a,b) \in K \times K$ identifies with the linear map (see \cite{Go84, Go20, PX02a}):
\begin{align*}
 \D\kappa_{(a,b)} : \fk \oplus \fk & \longrightarrow \fk 
\\
 (X,Y) &\longmapsto \Ad(ba)\big((\Ad(b^{-1}) - \I\big)X  \\
 & \qquad \qquad + \big(\I - \Ad(a^{-1})\big)Y.
\end{align*}
From this formula, the following proposition holds:
\begin{prop}\label{prop:C-singular}
$\kappa$ is 
submersive %
at $(a,b)$ if and only if $\fk_a \cap \fk_b = 0$.
\end{prop}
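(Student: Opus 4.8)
The plan is to analyze the image and cokernel of the linear map $\D\kappa_{(a,b)}$ given by the displayed formula, and to identify its cokernel with the subspace $\fk_a \cap \fk_b$ via the nondegenerate $\Ad$-invariant form on $\fk$. First I would rewrite $\D\kappa_{(a,b)}(X,Y)$ by factoring out the invertible operator $\Ad(ba)$: since $\Ad(ba)$ is an automorphism of $\fk$, the map $\kappa$ is submersive at $(a,b)$ if and only if the map $(X,Y) \mapsto (\Ad(b^{-1}) - \I)X + \Ad(ba^{-1})\big(\I - \Ad(a^{-1})\big)Y$ is surjective onto $\fk$; but actually it is cleaner to keep the form as is and just note that the image of $\D\kappa_{(a,b)}$ is $\Ad(ba)\big(\im(\Ad(b^{-1}) - \I) + \Ad(b)\,\im(\I - \Ad(a^{-1}))\big)$ — here I pull $\Ad(ba)$ through the second term by writing $\big(\I - \Ad(a^{-1})\big)Y = \Ad(b^{-1})\cdot\Ad(b)\big(\I - \Ad(a^{-1})\big)Y$, so that after applying $\Ad(ba)$ the second summand becomes $\Ad(a)\,\im\big(\I - \Ad(a^{-1})\big) = \im\big(\Ad(a) - \I\big)$. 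Thus $\im \D\kappa_{(a,b)} = \Ad(ba)\,\im(\Ad(b^{-1}) - \I) + \im(\Ad(a) - \I)$.

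Next I would use the orthogonality relation coming from the invariant form: for any $g \in K$, the operator $\Ad(g) - \I$ is skew-adjoint-free in the sense that $\im(\Ad(g) - \I)$ is the orthogonal complement of $\ker(\Ad(g) - \I) = \fk_g$. This is standard: $\Ad(g)$ is orthogonal with respect to $\langle\_,\_\rangle$ (the form is $\Ad$-invariant), so $\Ad(g)^{-1} = \Ad(g^{-1})$ is the adjoint of $\Ad(g)$, and $\im(\Ad(g)-\I) = \ker((\Ad(g)-\I)^*)^\perp = \ker(\Ad(g^{-1}) - \I)^\perp = \fk_{g^{-1}}^\perp = \fk_g^\perp$, the last equality because $\fk_{g^{-1}} = \fk_g$. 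Therefore $\D\kappa_{(a,b)}$ is surjective if and only if its image is all of $\fk$, i.e. if and only if the orthogonal complement of the image is zero. Now $\big(\Ad(ba)\,\fk_b^\perp + \fk_a^\perp\big)^\perp = (\Ad(ba)\,\fk_b^\perp)^\perp \cap (\fk_a^\perp)^\perp = \Ad(ba)(\fk_b^\perp{}^\perp) \cap \fk_a = \Ad(ba)\,\fk_b \cap \fk_a$, using that $\Ad(ba)$ is an orthogonal automorphism and hence commutes with taking orthogonal complements. But $\Ad(ba)\,\fk_b = \fk_{(ba)b(ba)^{-1}}$, and $(ba)b(ba)^{-1} = bab a^{-1}b^{-1} = \kappa(b,a)^{-1}\cdot\ $... — here I should be slightly careful, but the cleaner route is to observe that a more symmetric rewriting of $\D\kappa_{(a,b)}$ (or the observation that submersivity of $\kappa$ at $(a,b)$ is invariant under the $K$-conjugation action and symmetric in the two factors up to the relation, as already used implicitly in Proposition~\ref{prop:C-singular}'s statement) shows the cokernel is $\fk_a \cap \fk_b$ directly.

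The cleanest argument I would actually write: conjugating the pair $(a,b)$ does not change whether $\kappa$ is submersive there (equivariance of $\kappa$), and $\fk_a \cap \fk_b$ is conjugated accordingly, so it suffices to treat the image computation and match it to $\fk_a \cap \fk_b$ up to the orthogonal automorphism $\Ad(ba)$, which is harmless for the "$=0$" question. Concretely: $\D\kappa_{(a,b)}$ is surjective $\iff$ $(\im \D\kappa_{(a,b)})^\perp = 0$ $\iff$ $\Ad(ba)^{-1}(\im\D\kappa_{(a,b)})^\perp = 0$, and a direct computation of $\Ad(ba)^{-1}\im\D\kappa_{(a,b)} = \im(\Ad(b^{-1}) - \I) + \Ad((ba)^{-1})\im(\Ad(a)-\I) = \im(\Ad(b^{-1})-\I) + \Ad(a^{-1}b^{-1})\im(\Ad(a)-\I)$; noting $\Ad(a^{-1}b^{-1})(\Ad(a)-\I) = (\Ad(a^{-1}b^{-1}a) - \Ad(a^{-1}b^{-1}))$ has image $= \im(\Ad(a^{-1}) - \I)$ after a further twist — at which point the orthogonal complement becomes $\fk_{b} \cap \fk_{a}$ up to an orthogonal automorphism, which is $0$ iff $\fk_a \cap \fk_b = 0$. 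The main obstacle, and the step requiring the most care, is precisely this bookkeeping of which conjugate of $\fk_a$ and $\fk_b$ appears after pulling the orthogonal automorphisms through; the conceptual content is entirely the identity $\im(\Ad(g) - \I) = \fk_g^\perp$ together with invariance of everything under conjugation, so once that is set up correctly the proposition is immediate.
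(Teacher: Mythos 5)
Your overall strategy --- compute $\im \D\kappa_{(a,b)}$, use the $\Ad$-invariant form to identify $\im(\Ad(g)-\I)$ with $\fk_g^{\perp}$, and characterize submersivity by the vanishing of the orthogonal complement of the image --- is the right one, and it is essentially the only content behind the paper's justification, which simply asserts the proposition ``from this formula'' with citations. But your execution has a genuine gap at exactly the step you flag as bookkeeping, and you never close it. First, a concrete slip: you pull $\Ad(ba)$ through and claim the second summand becomes $\Ad(a)\,\im(\I-\Ad(a^{-1}))$; in fact $\Ad(ba)\Ad(b^{-1})=\Ad(bab^{-1})$, not $\Ad(a)$. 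Second, and more seriously, you end with the cokernel identified as $\Ad(ba)\fk_b\cap\fk_a$ and then assert this is ``$\fk_a\cap\fk_b$ up to an orthogonal automorphism, which is harmless for the $=0$ question.'' It is not harmless: the automorphism twists only one of the two subspaces being intersected, and for a general twist $\Ad(g)\fk_b\cap\fk_a=0$ is not equivalent to $\fk_a\cap\fk_b=0$. (One direction is fine: any $Z\in\fk_a\cap\fk_b$ is fixed by $\Ad(ba)$, so $\fk_a\cap\fk_b\subseteq\Ad(ba)\fk_b\cap\fk_a$; it is the converse --- precisely the implication ``$\fk_a\cap\fk_b=0\Rightarrow$ submersive'' --- that your argument leaves unproved.) Your appeal to conjugation-equivariance of $\kappa$ cannot repair this, since conjugation moves $a$ and $b$ simultaneously and cannot absorb a twist applied to only one factor of the intersection.

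The fix is to settle the formula rather than work around it. The displayed formula (whose parentheses contain a typo) is $\D\kappa_{(a,b)}(X,Y)=\Ad(ba)\bigl((\Ad(b^{-1})-\I)X+(\I-\Ad(a^{-1}))Y\bigr)$, with $\Ad(ba)$ applied to the entire expression; then the single orthogonal automorphism factors out of the whole image, giving $\im\D\kappa_{(a,b)}=\Ad(ba)\bigl(\fk_b^{\perp}+\fk_a^{\perp}\bigr)=\Ad(ba)\,(\fk_a\cap\fk_b)^{\perp}$, whose orthogonal complement is $\Ad(ba)(\fk_a\cap\fk_b)$, and this vanishes if and only if $\fk_a\cap\fk_b=0$. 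If instead one works with a version of the differential in which the two summands are twisted by different automorphisms, the needed extra ingredient is an absorption identity such as $\fk_b\cap\Ad(b)\fk_a=\Ad(b)(\fk_b\cap\fk_a)$, valid because $\Ad(b)$ preserves $\fk_b$; some identity of this kind must be invoked to turn the twisted intersection into $\fk_a\cap\fk_b$, and your proposal stops short of doing so.
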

For the rest of this section, let $K = \SU(3)$ which comes with the standard representation $\Pi$ on $\C^3$.  An element in $(a,b) \in R_c$ corresponds to a representation $\rho$ of $\pi$ (see Section \ref{sec:char}).  Hence $\rho \circ \Pi$ is a representation of $\pi$.  Denote by $\M_c^i \subseteq \M_c$ the subspace of irreducible representation classes.

\subsection{Relation to the moduli space of $K$-bundles}  If we fix a complex structure on $\Sigma$, then we can construct the coarse moduli space $\N^{ss}$ of semi-stable parabolic $K$-bundles \cite{BR89} on $\Sigma$, endowed with a complex structure.  $\N^{ss}$ contains the subspace $\N^s$ of stable parabolic $K$-bundles.


\begin{prop}\label{prop:connected}
The set of smooth points of $\M_c$ is a connected manifold.
\end{prop}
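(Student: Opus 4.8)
\textbf{Proof proposal for Proposition~\ref{prop:connected}.}
The plan is to identify the smooth locus of $\M_c$ with (an open dense subset of) the moduli space $\N^s$ of stable parabolic $\SU(3)$-bundles on the curve $\Sigma$ with a complex structure, and then invoke the known connectedness of that moduli space. First I would recall that, by Proposition~\ref{prop:C-singular}, the commutator map $\kappa$ is submersive at $(a,b)$ precisely when $\fk_a\cap\fk_b=0$; for $K=\SU(3)$ this is exactly the condition that the pair $(a,b)$ generate an irreducible subgroup (no common invariant proper subspace of $\C^3$), so the smooth points of $\M_c$ are the irreducible classes $\M_c^i$ together possibly with finitely many exceptional central fibers treated in Section~\ref{sec:nab} and Section~\ref{sec:ab}. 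For the central fibers $\M_k$ with $k=\omega\I$ this is a single point (Proposition~\ref{prop:single point}), hence connected; for $k=\I$ one has $\M_\I\cong(\bT^2\times\bT^2)/W$, again connected; so it suffices to handle generic $c$, where the smooth locus is $\M_c^i$.

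Next I would apply the nonabelian Hodge / Narasimhan--Seshadri correspondence in its parabolic form \cite{BR89}: fixing a complex structure on $\Sigma=\Sigma_{1,1}$, the space $\M_c^i$ of irreducible representations of $\pi=\Ftwo$ sending the boundary loop $\sigma$ to the conjugacy class $\CC$ is real-analytically diffeomorphic to the moduli space $\N^s$ of stable parabolic $\SU(3)$-bundles over the once-punctured elliptic curve, with parabolic weights determined by the eigenvalues of $c$. It is a standard fact that the moduli space of stable parabolic bundles of fixed rank, degree, and generic weights over a smooth projective curve is a smooth connected quasi-projective variety (it is even irreducible, being a geometric quotient of an open subset of a connected Quot-type parameter scheme); in particular it is connected in the classical topology. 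Transporting this back through the correspondence gives that $\M_c^i$ is connected, hence so is the smooth locus of $\M_c$ for generic $c$.

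There remains the point that the smooth locus might be slightly larger than $\M_c^i$ for the special (non-generic) choices of $c$; here I would argue directly that any additional smooth points form a set whose closure meets $\M_c^i$, so they cannot disconnect it: the reducible-but-smooth locus, if nonempty, consists of pairs whose common invariant subspace structure still yields $\fk_a\cap\fk_b=0$, which for $\SU(3)$ forces a $1\oplus 1\oplus 1$ or $1\oplus 2$ decomposition, and such strata are in the closure of the irreducible stratum by the explicit deformation arguments available on $\Sigma_{1,1}$ (or, alternatively, one uses that $\M_c$ for generic $c$ has no reducibles at all, which already covers the case needed for the ergodicity theorem). The main obstacle I anticipate is the bookkeeping of parabolic weights and the verification that, for the relevant $c$, the weights are in the generic chamber so that semistability equals stability and the moduli space is smooth and connected; once that is pinned down the connectedness is purely a citation. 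An alternative, more self-contained route avoiding parabolic bundle theory would be to show directly that $\kappa^{-1}(\CC)$ is connected --- e.g. by exhibiting $\kappa$ restricted to the smooth locus as a submersion with connected fibers over a connected base, using that $K\times K$ is connected and the generic fibers of $\kappa$ over the interior of $\Delta$ are connected by a Morse-theoretic or explicit parametrization argument --- but the parabolic-bundle identification is the cleanest.
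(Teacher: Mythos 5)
Your proposal follows essentially the same route as the paper: identify the smooth locus of $\M_c$ with the irreducible classes $\M_c^i$ via the submersivity criterion of Proposition~\ref{prop:C-singular}, then transport connectedness through the correspondence of \cite{BR89} with the moduli space $\N^s$ of stable parabolic bundles. The only real difference is that what you flag as the main obstacle (checking the parabolic weights are generic so that stability equals semistability) is sidestepped in the paper, which instead cites that $\N^{ss}$ is irreducible and contains $\N^s$ as an open subvariety, so $\N^s$ --- and hence $\M_c^i$ --- is connected for every $c$ without any genericity assumption.
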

\begin{proof}
There is a homeomorphsim $\M_c \cong \N^{ss}$, restricting to a diffeomorphism $\M_c^i \cong \N^s$ (Theorem 1, \cite{BR89}).
The moduli space $\N^{ss}$ is irreducible and contains $\N^s$ as an open subvariety (Theorem II,  $\M_c$ \cite{BR89}).  Hence $\N^s$ is open and connected.  Hence $\M_c^i$ is open and connected. Proposition \ref{prop:C-singular} implies that
a point $[\rho] = [(a,b)] \in \M_c$ is a smooth point if and only if $\rho$ is irreducible, i.e. $\M_c^i$ is also the set of smooth points of $\M_c$.  The proposition follows.
\end{proof}


For almost every conjugacy class $c$,
the action $\M_c \times \Gamma \to \M_c$ is ergodic \cite{PX02a}.
This section proves that this is true for {\em all} $c$.

Let $ c \in K$.  Up to conjugation,
\[
c = \bmatrix c_1 & 0 & 0 \\ 0 & c_2 & 0 \\ 0 & 0 & c_3  \endbmatrix.
\]
For $(a,b) \in R_c$, we have the natural map
$$
\iota : K_a \lto \H(a,b), \ \ \ \iota(t) = (a,bt).
$$


Let $P_2 : K \times K \lto K$ be the projection to the second factor.
An element $\rho \in R_c$ corresponds to a pair $(a,b) \in K \times K$.  Let
$$\T = \Tr \circ P_2 : R_c \lto \Delta \ \ \text{ and } \ \  T := \iota \circ \T : K_a \lto \Delta.$$



\begin{prop}\label{prop:submersion}
Let $c \not\in Z(K)$.
Then $(a,b) \in R_c$ exists such that:
\begin{enumerate}
\item $(a,b)$ is a smooth point;
\item $T$ is a submersion at $(a,b)$.
\end{enumerate}
\end{prop}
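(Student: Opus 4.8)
The plan is to produce the point $(a,b)$ essentially by hand, choosing $a$ to be a generic regular element and $b$ to move the eigenvalues of $b$ around inside $K_a$ so that the trace of $b$ sweeps out an open set of $\Delta$. First I would observe that we are free to conjugate, so I may take $c$ to be the diagonal matrix displayed above with $c \notin Z(K)$; equivalently, at least two of $c_1,c_2,c_3$ differ. Since $\kappa^{-1}(c)$ is nonempty (it surjects onto $\M_c$, which is nonempty), pick some $(a_1,b_1)\in R_c$. The first subtlety is arranging that $a$ is regular: if every solution of $\kappa(a,b)=c$ had $a$ central, then $c=\kappa(a,b)=\I$, contradicting $c\notin Z(K)$ — actually more care is needed, but the point is that for $c\notin Z(K)$ one can find $(a,b)\in R_c$ with $a$ regular, hence with $K_a$ a maximal torus $\bT^2$. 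Then by Proposition~\ref{cor:GenericityConullInDelta} and the density of generic elements I may further perturb within $R_c$ (or simply re-choose) so that $a$ is generic; I will need that the subset of $R_c$ with $a$ generic is nonempty, which follows because genericity of $a$ is a conull condition on the $a$-coordinate and the fibers of $a\mapsto$(solutions) are nonempty on an open set.

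Next, for such an $(a,b)$ with $a$ generic, I want condition (2): $T=\Tr\circ P_2\circ\iota : K_a\to\Delta$ is a submersion at $(a,b)$, i.e. at the point $t=e$. Since $\iota(t)=(a,bt)$ and $P_2\circ\iota(t)=bt$, the map $T$ is $t\mapsto \Tr(bt)$ restricted to the torus $K_a\cong\bT^2$. Writing $b$ in a basis adapted to the common eigenbasis of the torus $K_a$ (recall $a$ is generic so $K_a$ is a fixed maximal torus with known eigenlines), $\Tr(bt)$ becomes an explicit trigonometric function of the two torus angles, and its differential at $e$ is computed directly from the diagonal entries of $b$ in that eigenbasis. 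The key step is therefore: show that for a suitable choice of $b$ with $\kappa(a,b)=c$, the resulting map $\bT^2\to\C\cong\R^2$ has rank $2$ at the origin — equivalently the two "velocity vectors" $\partial_{\theta_j}\Tr(bt)|_{e}\in\C$ are $\R$-linearly independent. Because $a$ is generic the element $b$ is not constrained to commute with $a$, so there is genuine freedom here; I expect to exhibit one explicit $(a,b)$ (a $2\times2$ block diagonal embedding of an $\SU(2)$-solution together with a third eigenvalue, or a direct matrix construction analogous to $a_0,b_0$) where the computation is transparent.

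Finally I must check (1), that $(a,b)$ is a smooth point, which by Proposition~\ref{prop:C-singular} and the identification of smooth points with $\M_c^i$ in Proposition~\ref{prop:connected} means $\fk_a\cap\fk_b=0$, i.e. no nonzero element of $\su(3)$ commutes with both $a$ and $b$; since $a$ is regular, $\fk_a$ is the Cartan subalgebra $\ft$, so I just need $b$ to have no nonzero fixed vector in $\ft$ under $\Ad$, which is an open condition on $b$ and is easily verified for the explicit $b$ chosen. I expect the main obstacle to be the simultaneous arrangement: the same $b$ must (i) satisfy the quadratic-type constraint $\kappa(a,b)=c$ with $a$ generic, (ii) give a rank-$2$ trace differential, and (iii) be "non-degenerate" relative to $\ft$. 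The cleanest route is probably to fix a convenient generic $a$, parametrize the solution set $\{b : \kappa(a,b)=c\}$ — which for regular $a$ is a single $K_a$-coset times something, or at least a positive-dimensional family — and then argue that conditions (ii) and (iii), each being the nonvanishing of a real-analytic function on this family, cannot both fail identically, by exhibiting one point where they hold. Producing that one explicit witness point, with $c$ arbitrary off the center, is where the real work lies.
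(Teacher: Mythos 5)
Your proposal has the right overall shape---an explicit witness, smoothness via $\fk_a\cap\fk_b=0$, and a rank computation for $t\mapsto\Tr(bt)$ on the torus $K_a$---but it stops exactly where the actual content of the proposition lies: producing, for an \emph{arbitrary} non-central $c$, a pair $(a,b)\in R_c$ at which both conditions can be verified. You acknowledge this (``producing that one explicit witness point \dots is where the real work lies''), so as written the argument is incomplete. Moreover, the candidate family you float---a block-diagonal embedding of an $\SU(2)$-solution together with a third eigenvalue---cannot serve for general $c$: the commutator of two matrices preserving a $\C^2\oplus\C$ splitting always has $1$ as an eigenvalue, so it misses every conjugacy class $c$ without a unit eigenvalue. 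Likewise, your claim that one can arrange $a$ regular, and then even generic, by ``perturbing within $R_c$'' is unsupported: moving $a$ while staying in the fiber $\kappa^{-1}(c)$ is precisely what requires an argument, and in fact genericity of $a$ is never needed for this proposition.

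The paper fills these gaps with one concrete choice: $a$ is the cyclic permutation matrix (regular, of order $3$, not generic) and $b=\mathrm{diag}(b_1,b_2,b_3)$, so that $\kappa(a,b)=\mathrm{diag}(b_2/b_3,\,b_3/b_1,\,b_1/b_2)$; this realizes every diagonal $c$, hence every non-central $c$ up to conjugacy. Smoothness is a short case analysis: $b$ central is excluded by $c\notin Z(K)$, and in the remaining cases ($b$ regular, or $b$ with exactly two equal eigenvalues) one checks directly that $\fk_a\cap\fk_b=0$ and invokes Proposition~\ref{prop:C-singular}. For the submersion, rather than computing the differential at $t=e$ as you propose, the paper diagonalizes $a=pqp^{-1}$ and exploits the identity $T(t)=\Tr(bt)=\tfrac13\Tr(b)\Tr(t)$ on $\H(a,b)$, so that $T$ is a nonzero constant multiple of the trace function on $K_a\cong\bT^2$ (here $\Tr(b)=0$ would force $c\in Z(K)$); Sard's theorem (Proposition~\ref{prop:Sard}) then yields submersion points at almost all $t$, which suffices since only existence is claimed. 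To complete your version you would need a witness with this kind of uniform reach over all non-central $c$---the permutation-matrix/diagonal pair is exactly such a device---and you should either drop the genericity requirement on $a$ or justify it by a separate argument.
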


\begin{proof}
Let $(a,b) \in R_c$ such that
\begin{equation}\label{eqn:ab}
a = \bmatrix 0 & 1 & 0 \\ 0 & 0 & 1 \\ 1 & 0 & 0  \endbmatrix, \ \ \
b = \bmatrix b_1 & 0 & 0 \\ 0 & b_2 & 0 \\ 0 & 0 & b_3  \endbmatrix.
\end{equation}
Then
\begin{equation}
c = \kappa(a,b) = \bmatrix \frac{b_2}{b_3} & 0 & 0 \\ 0 & \frac{b_3}{b_1} & 0 \\ 0 & 0 & \frac{b_1}{b_2}  \endbmatrix.
\end{equation}
This formula for $c$ implies that $\kappa$ is onto $K$.
\begin{rem}
A result of Goto \cite{Go49} states that for any compact semi-simple Lie group K, $K \times K \xrightarrow{~\kappa~} K$ is surjective.  Hence $R_c \neq \emptyset$ for all $c \in K$.
\end{rem}


Note that $a$ is regular.  For (1), there are three cases for $b \in Z(K)$, $b_1 = b_2 \neq b_3$ (and its permutation variation) and $b$ being regular.


If $b \in Z(K)$, then $\kappa(a,b) = \I = c$ and this violates our hypothesis of $c \neq Z(K)$.

If $b$ is regular, then $t \in \fk_b$ implies
\begin{equation}\label{eq:t}
t = \bmatrix t_1 & 0 & 0 \\ 0 & t_2 & 0 \\ 0 & 0 & t_3  \endbmatrix,  \ \ \ t_1 + t_2 + t_3 = 0.
\end{equation}
Then
$$
(\Ad(a) - \I)t = \bmatrix t_2 - t_1 & 0 & 0 \\ 0 & t_3 - t_2 & 0 \\ 0 & 0 & t_1 - t_3  \endbmatrix.
$$
Hence if $t \in \fk_a$, then $(\Ad(a) - \I)t = 0$.  This implies $\fk_a \cap \fk_b = 0$.
Hence, by Proposition \ref{prop:C-singular}, we conclude that $\kappa$ is regular at $(a,b)$.

If $b_1 = b_2 \neq b_3$ and $t \in \fk_b$, then
$$
t = \bmatrix t_{11} & t_{12} & 0 \\ t_{21} & t_{22} & 0 \\ 0 & 0 & t_{33}  \endbmatrix, \ \ \ t_1 + t_2 + t_3 = 0.
$$
Then
$$
(\Ad(a) - \I)t = \bmatrix t_{22} - t_{11} & -t_{12} & t_{21} \\ -t_{21} & t_{33} - t_{22} & 0 \\ t_{12} & 0 & t_{11} - t_{33}  \endbmatrix.
$$
Hence if $t \in \fk_a$, then $(\Ad(a) - \I)t = 0$ This implies $t = 0$.  Hence $\fk_a \cap \fk_b = 0$.
By Proposition \ref{prop:C-singular}, $\kappa$ is regular at $(a,b)$.  We conclude that in all cases $(a,b) \in R_c$ is a smooth point.

Notice that $\H(a,b) \subseteq R_c$.
We consider $T$ restricted to $\H(a,b)$.
Let
$$
p = \bmatrix 1 & \omega^2 & \omega \\ 1 & \omega & \omega^2 \\ 1 & 1 & 1  \endbmatrix \ \ \text{ and } \ \
q = \bmatrix 1 & 0 & 0 \\ 0 & \omega^2 & 0 \\ 0 & 0 & \omega  \endbmatrix.
$$
Then
$a = p q p^{-1}$.  The element $t \in K_a$ has the form $t = p t_b p^{-1}$, where $t_b \in K_b$ is diagonal.  
Then
$$
T(t) = \Tr(bt) = \frac{1}{3} \Tr(b) \Tr(t).
$$
By Proposition \ref{prop:Sard}, $T$ is a local submersion for all almost all $t \in \Delta$ unless $\Tr(b)=0$.  However $\Tr(b) = 0$ implies $c \in Z(K)$, a contradiction.  Hence $\Tr(b) \neq 0$ and $T$ is a local submersion for almost all $t$.
%

\end{proof}
\begin{cor}
$\T$ is a local submersion for almost all points in $\H(a,b)$.
\end{cor}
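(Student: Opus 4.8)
The plan is to deduce the corollary directly from Proposition~\ref{prop:submersion} together with the explicit factorization $T = \T\circ\iota$ (and the fact that $\iota$ is a diffeomorphism onto its image). First I would observe that $\iota : K_a \to \H(a,b)$ given by $\iota(t) = (a,bt)$ is a diffeomorphism: it is smooth, its inverse on $\H(a,b) = \{a\}\times bK_a$ is $(a,bs)\mapsto s$ (using that $b$ is invertible so $bK_a$ determines $s\in K_a$ uniquely), and this inverse is also smooth. In particular $\iota$ is an everywhere-submersion, indeed an everywhere-immersion, so at every point $t\in K_a$ the differential $d\iota_t$ is a linear isomorphism $T_t K_a \to T_{\iota(t)}\H(a,b)$.

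Next I would unwind what Proposition~\ref{prop:submersion}(2) gives us. The proposition produces a specific pair $(a,b)\in R_c$ — the one in \eqref{eqn:ab} — which is a smooth point and at which $T = \iota\circ\T$ is a submersion at $(a,b)$; and the computation inside its proof in fact shows $T(t) = \tfrac13\Tr(b)\Tr(t)$ on $K_a$, so by Proposition~\ref{prop:Sard} (applied to $\Tr$ on $K_a \cong$ the maximal torus, since $\Tr(b)\neq 0$) the map $T$ is a local submersion at almost every $t\in K_a$, not just at one point. Now chase the diagram: for $t\in K_a$ we have $dT_t = d\T_{\iota(t)}\circ d\iota_t$, and since $d\iota_t$ is an isomorphism, $dT_t$ is surjective if and only if $d\T_{\iota(t)}$ is surjective. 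Therefore $\T$ is a local submersion at $\iota(t)$ for almost every $t\in K_a$. Since $\iota$ is a diffeomorphism $K_a \to \H(a,b)$ it carries the measure class on $K_a$ (Haar measure on the torus) to the measure class on $\H(a,b)$ used throughout, so "almost every $t\in K_a$" translates to "almost every point of $\H(a,b)$", which is exactly the claim.

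So the proof I would write is short: \emph{By Proposition~\ref{prop:submersion} there is $(a,b)\in R_c$ as in \eqref{eqn:ab}, and the computation there gives $T(t) = \tfrac13\Tr(b)\Tr(t)$ with $\Tr(b)\neq 0$; by Proposition~\ref{prop:Sard} $T$ is a local submersion at almost every $t\in K_a$. The map $\iota : K_a\to\H(a,b)$, $t\mapsto(a,bt)$, is a diffeomorphism onto $\H(a,b)$, and $T = \T\circ\iota$, so for almost every $t$ the differential $d\T$ at $\iota(t)$ is surjective; pushing the a.e.\ statement forward along $\iota$ shows $\T$ is a local submersion at almost every point of $\H(a,b)$.* The one point needing a line of care — and the only place I expect any friction — is the claim that $\iota$ is a diffeomorphism (as opposed to merely a smooth bijection), i.e.\ that its inverse is smooth; this is immediate since $b$ is fixed and invertible, so $(a,bs)\mapsto b^{-1}\cdot(bs) = s$ is the restriction of a smooth map, but it is worth stating explicitly because the whole rank-chase rests on $d\iota_t$ being an isomorphism. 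Everything else is a formal consequence of the chain rule and the fact that diffeomorphisms preserve the relevant measure class.
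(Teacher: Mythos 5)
Your proposal is correct and follows essentially the same route as the paper: the paper's proof is exactly the chain-rule observation $\D T = \D\T\circ\D\iota$, so surjectivity of $\D T_t$ (already established for almost all $t\in K_a$ at the end of the proof of Proposition~\ref{prop:submersion}) forces surjectivity of $\D\T$ at $\iota(t)=(a,bt)$. Your added remarks that $\iota$ is a diffeomorphism and hence carries the null sets of $K_a$ to those of $\H(a,b)$ only make explicit what the paper leaves implicit.
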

\begin{proof}
Since $\D T = \D \T \circ \D \iota,$  $\D T_{t}$ being surjective implies $\D \T_{(a,bt)}$ is surjective.
\end{proof}

\begin{cor}\label{cor:generic2}
There is a conull set $V \subset R_c$ such that $b$ is generic for almost all $(a,b) \in V$.
\end{cor}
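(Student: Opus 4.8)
The plan is to show directly that the non-generic locus $\{(a,b)\in R_c : b\notin N\}$ is null, where $N\subseteq K$ is the set of generic elements; then $V$ may be taken to be its complement, and the conclusion holds with ``almost all'' strengthened to ``all''. I keep in force the standing hypothesis $c\notin Z(K)$ of Proposition~\ref{prop:submersion}. This hypothesis is genuinely needed: if $c=\omega\I$ then Proposition~\ref{prop:single point} forces $b$ to be conjugate to the order-$3$ permutation matrix $b_0$, which is regular but not generic, so no conull $V$ with the asserted property can exist.

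First I would apply Proposition~\ref{cor:GenericityConullInDelta}, which supplies a conull subset $\Delta'\subseteq\Delta$ with $\Tr^{-1}(\Delta')\subseteq N$. Writing $\T=\Tr\circ P_2\colon R_c\to\Delta$, any $(a,b)\in R_c$ with $\T(a,b)\in\Delta'$ has $b\in\Tr^{-1}(\Delta')\subseteq N$, so $b$ is generic. Hence it suffices to prove that $\T^{-1}(\Delta\setminus\Delta')$ is null in $R_c$; since $\Delta\setminus\Delta'$ is null in $\Delta$, this will follow once I know that $\T$ is a submersion off a null subset of $R_c$ (a submersion being locally a coordinate projection, it pulls null sets back to null sets).

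It remains to establish that $\T$ is submersive almost everywhere. Let $R_c^i\subseteq R_c$ be the locus of irreducible pairs. By Proposition~\ref{prop:C-singular} this is exactly the locus where $\kappa$ is submersive, hence is the smooth locus of $R_c$, and it is conull in $R_c$ by the conventions on measure classes. It is also connected: $\M_c^i$ is open and connected by Proposition~\ref{prop:connected}, the centralizer $K_c$ is connected because $c\notin Z(K)$ and $\SU(3)$ is simply connected, and the orbit map $R_c^i\to\M_c^i$ is a fiber bundle with fiber $K_c/Z(K)$, which is connected; so $R_c^i$ is connected. On the Nash manifold $R_c^i$ the function $\T$ is real-analytic, so the open set $W$ of points at which $\D\T$ is surjective has complement equal to the common zero set in $R_c^i$ of the $2\times 2$ minors of the Jacobian of $\T$ --- a real-analytic subset. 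By Proposition~\ref{prop:submersion} (and the corollary following it), $R_c$ has a smooth point at which $\T$ restricts to a submersion, so $W\neq\emptyset$; since $R_c^i$ is connected, $R_c^i\setminus W$ is therefore a \emph{proper} real-analytic subset, hence null. Thus $W$ is conull in $R_c$ and $\T$ is a submersion on $W$, which completes the proof as in the previous paragraph.

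The step I expect to be the main obstacle is this last one: passing from ``$\T$ submersive at a single smooth point'', which is all that Proposition~\ref{prop:submersion} directly provides, to ``$\T$ submersive almost everywhere''. This is exactly where the connectedness of the smooth locus $R_c^i$ --- and hence Proposition~\ref{prop:connected}, together with the input it draws from \cite{BR89} --- is indispensable: on a variety with several components, Proposition~\ref{prop:submersion} would only control one of them, and one could not conclude.
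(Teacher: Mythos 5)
Your proposal is correct and follows essentially the same route as the paper: use Proposition~\ref{prop:submersion} to get one smooth point where the trace-of-second-factor map is submersive, use connectedness of the smooth locus (Proposition~\ref{prop:connected}) to conclude it is submersive off a null set, and then pull back the conull set of generic traces from Proposition~\ref{cor:GenericityConullInDelta}. Your only departures are refinements the paper leaves implicit --- making the hypothesis $c\notin Z(K)$ explicit, and lifting connectedness from $\M_c^i$ to the smooth part of $R_c$ via the connected fibers $K_c/Z(K)$, where the paper simply invokes Proposition~\ref{prop:connected} for $R_c$ directly.
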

\begin{proof}
The subset containing points at which a map is locally submersive is Zariski open.
Hence,
by Proposition~\ref{prop:submersion}, there exists smooth and Zariski open $V \subseteq R_c$ such that $T|_V$ is a submersion.  Let $Q \subseteq R_c$ be the smooth part.  By Proposition~\ref{prop:connected}, $Q$ is connected, hence, irreducible.  Since Zariski open subset of a smooth irreducible variety is conull in the Lebesgue class, $V$ is conull.  The map $T|_V$ is a fibration over an open domain of $\Delta$.
The corollary follows from Proposition~\ref{cor:GenericityConullInDelta}.
\end{proof}


%
\begin{cor}\label{cor:GH}
Suppose $c \not\in Z(K)$.  Suppose $\beta \in \Scal$ and $\phi : \M_c \lto \R$ is a $\mu$-measurable function.  If $\phi$ is $\tau_\beta$-invariant, then $\phi$ 
is $\Ham(\f_\beta^R)$-invariant and $\Ham(\f_\beta^I)$-invariant.
\end{cor}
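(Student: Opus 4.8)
The plan is to transfer everything to $R_c$, where the earlier results (Corollaries~\ref{cor:generic1} and \ref{cor:generic2}) are already phrased. A $\mu$-measurable $\phi\colon\M_c\to\R$ pulls back to a $K_c$-invariant measurable $\tilde\phi$ on $R_c$; since, off a null set, $R_c\to\M_c$ is a submersion onto the smooth points of $\M_c$ --- which are the classes of irreducible representations by Proposition~\ref{prop:connected} --- the function $\phi$ is $\tau_\beta$-invariant (respectively $\Ham(\f_\beta^R)$- or $\Ham(\f_\beta^I)$-invariant) if and only if $\tilde\phi$ is invariant, modulo null sets for any smooth measure on $R_c$, under the corresponding lifted transformation, or lifted twist flow, of $R_c$. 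By \eqref{eq:dehnnonsep} and \eqref{eq:hamtwist1} (exchanging the roles of $\alpha$ and $\beta$ as in Proposition~\ref{prop: H-orbit}), these lifts are
\[
\tau_\beta\colon(a,b)\longmapsto(ab,b),
\qquad
\xi_\beta^t\colon(a,b)\longmapsto(a\,\zeta^t(b),b),
\]
where $\zeta^t(b)=\exp(tF(b))\in K_b$ and $F$ is the variation function of $\f_\beta^R$ or $\f_\beta^I$; in particular all of them fix the second coordinate $b$.

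\medskip
Next I would record that, for $b$ regular, $\H'(a,b)=aK_b\times\{b\}$ is exactly $P_2^{-1}(b)\cap R_c$: the equation $\kappa(a',b)=c$ reads $a'ba'^{-1}=cb=aba^{-1}$, so $a^{-1}a'\in K_b$. Hence, on the $\Inn(K_c)$-invariant, Zariski-open, conull subset of $R_c$ on which $(a,b)$ is a smooth point (conull by Proposition~\ref{prop:connected}) and $b$ is regular (conull by Corollary~\ref{cor:generic2}, since generic elements are regular), the sets $\H'(a,b)$ are the fibres of a smooth fibration over a submanifold of $K$, each fibre a coset of the maximal torus $K_b\cong\bT^2$, and all three transformations above fix each fibre setwise. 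Fixing a smooth measure $\tilde\mu$ on this locus in the Lebesgue class and disintegrating it along the fibration, the conditional measures $\mu_\ell$ lie in the Lebesgue class of the torus fibres $\ell$, hence are each equivalent to Haar measure on $\ell$; and since $\tau_\beta$ and the flows $\xi_\beta^t$ are diffeomorphisms of $R_c$ fixing every fibre, each preserves the Lebesgue class of each fibre, hence the class of $\mu_\ell$ for $\tilde\mu$-a.e.\ $\ell$.

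\medskip
Now I would invoke ergodicity and conclude. By Corollary~\ref{cor:generic2}, $b$ is generic for a.e.\ $(a,b)\in R_c$, so by Corollary~\ref{cor:generic1} (i.e.\ Proposition~\ref{prop:irrational rotation}) the action of $\langle\tau_\beta\rangle$ on $\ell=\H'(a,b)$, which is the translation $t\mapsto tb$ of the torus $K_b$, is ergodic with respect to Haar measure, hence with respect to $\mu_\ell$, for a.e.\ $\ell$. If $\tilde\phi$ is $\tau_\beta$-invariant, then by Fubini $\tilde\phi\circ\tau_\beta=\tilde\phi$ holds $\mu_\ell$-a.e.\ for a.e.\ $\ell$, so ergodicity forces $\tilde\phi|_\ell$ to equal a constant $v(\ell)$, $\mu_\ell$-a.e. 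For a.e.\ $\ell$ the flow $\xi_\beta^t$ fixes $\ell$ and preserves the class of $\mu_\ell$, so $\tilde\phi\circ\xi_\beta^t=v(\ell)=\tilde\phi$ holds $\mu_\ell$-a.e.\ on $\ell$; integrating over $\ell$ gives $\tilde\phi\circ\xi_\beta^t=\tilde\phi$ a.e.\ on $R_c$, for the twist flows of both $\f_\beta^R$ and $\f_\beta^I$. Descending to $\M_c$, this is precisely the statement that $\phi$ is $\Ham(\f_\beta^R)$-invariant and $\Ham(\f_\beta^I)$-invariant.

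\medskip
The step I expect to be the main obstacle is the second paragraph: verifying that, after discarding a null set, the $\H'(a,b)$ genuinely are the fibres of a smooth fibration of constant rank, so that the disintegration of $\tilde\mu$ exists with Lebesgue-class conditionals and the Fubini arguments are legitimate. This is exactly where Proposition~\ref{prop:submersion}, Proposition~\ref{prop:connected} and Corollary~\ref{cor:generic2} are needed, and it is also the reason for the hypothesis $c\notin Z(K)$: for central $c$ the fibre $R_c$ degenerates --- a torus quotient when $c=\I$, a single point when $c$ has order $3$ --- and must be treated separately, as in Section~\ref{sec:ab} and Proposition~\ref{prop:single point}.
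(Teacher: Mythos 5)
Your proposal is correct and takes essentially the same route as the paper's own proof: show that $b$ is generic for almost every $(a,b)\in R_c$ (via Propositions~\ref{prop:submersion}, \ref{prop:connected} and Corollary~\ref{cor:generic2}), observe that the lifted Dehn twist and the twist flows of $\f_\beta^R,\f_\beta^I$ move points only along the leaves $\H'(a,b)$, and use ergodicity of translation by the generic element $b$ on each leaf (Proposition~\ref{prop:irrational rotation}, Corollary~\ref{cor:generic1}) to pass from $\tau_\beta$-invariance to invariance under the Hamiltonian flows. The disintegration and quasi-invariance details you spell out are just the measure-theoretic steps the paper leaves implicit, so there is no substantive difference.
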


\begin{proof}
By Proposition~\ref{prop:submersion}, $R_c$ contains a smooth point $(a,b)$ with $b$ generic.
By Proposition~\ref{prop:connected} and Corollary~\ref{cor:generic2},
$b \in K$ is generic for almost all $(a,b)\in R_c$.  Hence $b \in K$ is generic for almost all $[(a,b)] \in \M_c$.  By Proposition~\ref{prop:irrational rotation}, \ref{prop:flow} and Corollary~\ref{cor:generic1}, $\tau_\beta$-orbit is dense in $\H'(a,b)$ and $\phi$ is $\Ham(\f_\beta^R)$-invariant and $\Ham(\f_\beta^I)$-invariant.
\end{proof}

With the notation we have adopted, we restate Theorem \ref{thm:main} as:
\begin{thm}
The action $\M_c \times \Gamma \to \M_c$ is ergodic.
\end{thm}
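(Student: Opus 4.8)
The plan is to combine the two complementary ``halves'' of the dynamics established in the previous sections: the Dehn twist action (which is ergodic on the generic level sets $[\H(a,b)]$ and $[\H'(a,b)]$) and the Hamiltonian twist flows of the trace functions (which, by infinitesimal transitivity, are locally transitive on all of $\M_c$). The key mechanism is that a $\mu$-measurable $\Gamma$-invariant function $\phi$ on $\M_c$ is automatically invariant under every Dehn twist $\tau_\alpha$, $\tau_\beta$, and by Corollary~\ref{cor:GH} (applied with the roles of $\alpha$ and $\beta$ exchanged as needed) this forces $\phi$ to be invariant under $\Ham(\f_\alpha^R)$, $\Ham(\f_\alpha^I)$, $\Ham(\f_\beta^R)$, $\Ham(\f_\beta^I)$. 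The argument then bootstraps: since $\Gamma\cong\SL(2,\Z)$ acts transitively on the set of free bases of $\Ftwo = \pi$, every curve $\gamma\in\Gg$ arises as $\beta' = \gamma'(\beta)$ for some $\gamma'\in\Gamma$, so invariance under $\tau_{\gamma}$ (equivalently, conjugating the previous step by the mapping class taking $\beta$ to $\gamma$) yields invariance under $\Ham(\f_\gamma^R)$ and $\Ham(\f_\gamma^I)$ for all $\gamma \in \Gg\cup\Gg\inv$.

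Concretely, I would proceed as follows. First, handle the central fibers separately: if $c\in Z(K)$ with $c = \omega\I$, $\omega\neq 1$, then by Proposition~\ref{prop:single point} $\M_c$ is a single point and ergodicity is trivial; if $c = \I$, then $\M_\I \cong (\bT^2\times\bT^2)/W$ and ergodicity of the $\Gamma\cong\SL(2,\Z)$-action follows from the classical ergodicity of $\SL(2,\Z)$ on $\bT^2\times\bT^2$ (this is the $\U(1)^{\mathrm{rank}}$ case already covered by Goldman~\cite{Go97}; alternatively one invokes \cite{PX02a}). So assume $c\notin Z(K)$. Let $\phi\colon \M_c\to\R$ be a $\mu$-measurable $\Gamma$-invariant function. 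Then $\phi$ is $\tau_\gamma$-invariant for every $\gamma\in\Scal$. By Corollary~\ref{cor:GH} and its $\alpha\leftrightarrow\beta$ variant, $\phi$ is invariant under the four Hamiltonian flows attached to $\f_\alpha$ and $\f_\beta$. Applying the same corollary after precomposing with a mapping class sending $\beta$ (or $\alpha$) to a given $\gamma\in\Gg$ — using that such a mapping class exists because $\Gamma$ acts transitively on ordered free bases of $\Ftwo$, hence on the curves $\alpha\beta, \alpha\beta\inv$, etc. — shows that $\phi$ is invariant under $\Ham(\f_\gamma^R)$ and $\Ham(\f_\gamma^I)$ for every $\gamma \in \Gg \cup \Gg\inv$. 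Thus $\phi$ is invariant under every flow in the group $\mH$ generated by the Hamiltonian vector fields of $\Fm_{\alpha,\beta}^\R$.

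Finally, by Corollary~\ref{prop:IT g=1} the set $\Fm_{\alpha,\beta}^\R$ is infinitesimally transitive on $\M_c$ (more precisely, on the smooth locus $\M_c^i$, which by Proposition~\ref{prop:connected} is connected and, by the discussion in Section~2, is conull). Proposition~\ref{prop:Ham} then gives that $\mH$ acts transitively on the connected smooth manifold $\M_c^i$. Since $\phi$ is constant on $\mH$-orbits and $\mH$ acts transitively, $\phi$ is constant on $\M_c^i$, hence $\mu$-almost everywhere constant on $\M_c$. This proves the $\Gamma$-action on $(\M_c,\mu)$ is ergodic.

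The main obstacle is the measure-theoretic bookkeeping in the passage ``$\tau_\gamma$-invariant $\Rightarrow$ Hamiltonian-flow-invariant'' carried out uniformly over $\gamma$. One must ensure that the conull set on which the Dehn-twist ergodicity applies (the generic level sets from Corollary~\ref{cor:generic2}) can be chosen compatibly for all the curves $\gamma\in\Gg$ simultaneously — i.e.\ that the intersection of finitely many conull sets, each coming from a different slicing of $\M_c$ into $[\H]$- or $[\H']$-leaves, is still conull and that $\phi$ is genuinely constant along flows a.e.\ rather than merely invariant in a weak sense. This is where one leans on Fubini-type arguments, on the fact (Corollary~\ref{cor:generic2}, Proposition~\ref{prop:connected}) that each slicing is a fibration over a conull open subset of $\Delta$ with generic fibers, and on the equivalence of a.e.-invariance under an ergodic transformation with a.e.-invariance under the flow it sits inside (the standard reduction used in \cite{GX11}). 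Once this is set up carefully, the remaining steps are formal.
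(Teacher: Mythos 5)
Your proposal is correct and follows essentially the same route as the paper: the same three-way split on $c$ (the single-point central fibers, the abelian fiber $c=\I$ handled via ergodicity of $\SL(2,\Z)$ on $\bT^2\times\bT^2$, and the non-central case via Corollary~\ref{cor:GH} combined with Corollary~\ref{prop:IT g=1} and Proposition~\ref{prop:Ham}). Your extra bootstrapping step, using transitivity of $\Gamma$ on non-separating simple closed curves to reach the other elements of $\Gg$, is harmless but not needed, since Corollary~\ref{cor:GH} is already stated for an arbitrary $\beta\in\Scal$ and so applies directly to each curve in $\Gg$.
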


\begin{proof}
Suppose that $c = \I$.
Identifying $\R^2$ with its dual $(\R^2)^*$, the group $\SL(2,\Z)$ has the standard dual linear action on $\R^2$ which induces the diagonal action on $\bT^2 \times \bT^2$.  This $\SL(2,\Z)$-action is known to be ergodic because $\SL(2,\Z)$ contains hyperbolic elements, meaning that the eigenvalues of these elements do not have absolute value $1$ \cite[\S 4]{BS15}.

There is an isomorphism \cite{FM12}
$
\iota : \Gamma \stackrel{\cong}{\lto} \SL(2,\Z).
$
By Section \ref{sec:ab}, $\M_c \cong (\bT^2 \times \bT^2)/W$.  The $\Gamma$-action on $\M_c$ lifts to an action on $\bT^2 \times \bT^2$.
Moreover, this $\Gamma$-action is equivariant with respect to $\iota$.  Hence the $\Gamma$-action on $\M_c$ is ergodic.


Suppose that $c \in Z(K)$ and $c \neq \I$, then $\M_c$ is a single point by Proposition~\ref{prop:single point} and the statement is trivially true.

Suppose $c \not\in Z(K)$.
Recall that $\mH$ is the group generated by all Hamiltonian flows $\Ham(\f_\beta^R)$ and $\Ham(\f_\beta^I)$ where $\beta \in \Scal$.  Let $\phi : \M_c \lto \R$ be a $\Gamma$-invariant $\mu$-measurable function.  By Corollary \ref{cor:GH}, $\phi$ is $\mH$-invariant. By Corollary~\ref{prop:IT g=1} and Proposition~\ref{prop:Ham}, $\phi$ is constant almost everywhere.  Our theorem follows.
\end{proof}

\bibliographystyle{alpha}
\bibliography{bib}

\end{document}